 \newtheorem{thm}{Theorem}[section]
 \newtheorem{cor}[thm]{Corollary}
 \newtheorem{lem}[thm]{Lemma}
 \theoremstyle{definition}
 \newtheorem{defn}[thm]{Definition}
 \theoremstyle{remark}
 \numberwithin{equation}{section}
\begin{document}
\title[Toeplitz Products on $L^2_a(\mathbb{C}^n)$]{Products of Toeplitz Operators\\ on a Vector Valued Bergman Space}%

%----------Author 1
\author[Robert Kerr]{Robert Kerr}

\address{%
Department of Mathematics,\\ University of Glasgow,\\ University
Gardens,\\ Glasgow G12 8QW,\\ UK}

\email{rkerr@maths.gla.ac.uk}

\thanks{This work was completed with the support of the EPSRC. Part of this work was done while visiting the Fields Institute as part of the Thematic Program on New Trends in Harmonic Analysis}
\subjclass{Primary 47B35}

\keywords{Bergman space, Vector valued functions, Toeplitz operator}

\begin{abstract}

We give a necessary and a sufficient condition for the boundedness
of the Toeplitz product $T_FT_{G^*}$ on the vector valued Bergman
space $L_a^2(\mathbb{C}^n)$, where $F$ and $G$ are matrix symbols
with scalar valued Bergman space entries. The results generalize
those in the scalar valued Bergman space case \cite{products}. We
also characterize boundedness and invertibility of Toeplitz products
$T_FT_{G^*}$ in terms of the Berezin transform, generalizing results
found by Zheng and Stroethoff for the scalar valued Bergman space
\cite{products2}.
\end{abstract}

\maketitle
%\tableofcontents
\section{Introduction}
\subsection{Notation}
 For a measurable function $f:\mathbb{D} \rightarrow
\mathbb{C}^n$ with $(\int_{\mathbb{D}}
||f(z)||^p_{\mathbb{C}^n}dA(z))^{\frac{1}{p}} < \infty$, we say that
$f \in L^p(\mathbb{D},\mathbb{C}^n)$. The vector-valued Bergman
space $L^p_a(\mathbb{D},\mathbb{C}^n)$ is the intersection of
$L^p(\mathbb{D},\mathbb{C}^n)$, with the analytic $\mathbb{C}^n$-
valued functions on $\mathbb{D}$ with the usual identification of
functions which only differ on sets of measure 0. The norm is given
by $||f||_{L^p_a(\mathbb{C}^n)} = (\int_{\mathbb{D}}
||f(z)||^p_{\mathbb{C}^n}dA(z))^{\frac{1}{p}}$, where $dA$ is
normalized Lebesgue measure on the unit disk $\mathbb{D}$. In the
case $p=2$ this space becomes a Hilbert space with the inner product
given by $\left\langle f,g \right\rangle = \int_{\mathbb{D}}
\left\langle f(z),g(z) \right\rangle_{\mathbb{C}^n}dA(z).$ $L^p$ and
$L_a^2$ are Banach spaces for $1 \leq p < \infty$. For
details see for example \cite{blasco}.\\

On the scalar valued Bergman space $L_a^2$, the Toeplitz operator
with symbol $f \in L^2$ is the densely defined operator $T_fv =
P(fv)$, where $P$ is the orthogonal projection from $L^2$ into
$L_a^2$ and $v$ is a polynomial. The Toeplitz operator is a
multiplication operator composed with an orthogonal projection. The
Bergman projection is explicitly  given by the following integral;
\[Pf(w) = \left\langle f,K_w \right\rangle = \int_{\mathbb{D}}\frac{f(w)}{(1-\overline{z}w)^2}dA(w),\]
where $K_w(z) = \frac{1}{(1-z\overline{w})^2}$ is the reproducing
kernel of the Bergman space $L_2^2(\mathbb{D})$. So using this
explicit form we can define a Toeplitz operator on a dense subset of
$L^2_a$, the polynomials, with symbol in $L^2$ rather than
$L^\infty$. We can also see that with a symbol $f \in L^2$ and $v
\in L^2_a$, $T_fv(w)$ is well defined point-wise for each $w \in
\mathbb{D}.$

In \cite{sarason} Sarason conjectured that a product of Toeplitz
operators (defined densely in an appropriate way for analytic
functions $f$ and $g$) $T_fT_{\overline{g}}$ on the Hardy Space
$H^2$ is bounded if and only if
$\widehat{|f|^2}(w)\widehat{|g|^2}(w)$ was uniformly bounded on the
disc, $\widehat{f}(w)$ being the Poisson integral of $f$. This
turned out to be false \cite{nazarov}.

Another conjecture
 by Sarason dealt with in \cite{products,park,invproducts,pottproducts,products2,products3}
and \cite{products4} was as to when the densely defined operator
$T_fT_{\overline{g}}$ is bounded on $L^2_a$
 for $f,g \in L^2_a$? The question was originaly posed by Sarason in \cite{sarason} and a conjecture in section 8 of \cite{products} more explicitly  resembles Sarason's Hardy space case conjecture.
 This time it is conjectured that the Toeplitz product $T_fT_{\overline{g}}$ is bounded for analytic $f$ and $g$ on the Bergman space $L_a^2$ if and only if
 $\widetilde{|f|^2}(w)\widetilde{|g|^2}(w)$ is uniformly bounded,
 where $\widetilde{f}$ is the Berezin transform of $f$.

 The question is investigated in various
 different cases, such as the weighted Bergman space with standard weights and the Bergman space on the unit ball and polydisk. These papers prove results that approximate to the Bergman space version of Sarason's conjecture as stated in section 8 of \cite{products}. The purpose of this paper is to investigate products of Toeplitz operators on a Bergman space of vector-valued functions. In the case of the vector valued Bergman space
 $L_a^2(\mathbb{C}^n)$,
we define the Toeplitz operator to be the densely defined
composition of multiplication with a matrix valued function and the
orthogonal projection from $L^2(\mathbb{C}^n)$ into
$L_a^2(\mathbb{C}^n)$. So in this case the symbol F will be a matrix
of $L^2$ functions and $T_Fv = P(Fv)$, where $v$ is a bounded
analytic $\mathbb{C}^n$ valued function. If
\[F = \begin{pmatrix} f_{11}  &  f_{12} & \ldots\\  f_{21} &\ddots
& \\ \vdots & \end{pmatrix}\] and $v = (v_1,v_2,\ldots,v_n)$, where
$f_{ij} \in L^2$ and $v_i \in H^\infty$, then \[T_Fv = P(Fv) =
P\left(\sum_{i=1}^n f_{1i}v_i,\sum_{i=1}^n f_{2i}v_i,
\ldots,\sum_{i=1}^n f_{ni}v_i\right) =
\begin{pmatrix}T_{f_{11}} & T_{f_{12}} & \ldots\\ T_{f_{21}} &\ddots
& \\ \vdots & \end{pmatrix}v,\] where each $T_{f_{ij}}$ is a densely
defined Toeplitz operator on the scalar Bergman space $L_a^2$. When
looking at products of these Toeplitz operators analagous to the
treatment in \cite{products} we have products of the form $T_F
T_{G^*}$, where $F$ and $G$ are square matrices of scalar valued
Bergman space $L_a^2$ functions.

\subsection{Main Theorems}
The first two main theorems follow, one giving a sufficient
condition for the Toeplitz product $T_FT_{G^*}$ to be bounded and
the other a necessary condition. Both are conditions involving the
Berezin transform;

\begin{defn} The Berezin transform of a matrix $A$ with $L^2$
entries is the matrix-valued function $B(A)$, where $B(A)(w) = \int
(A \circ \phi_w)(z)dA(z)$, $w \in \mathbb{D}$, composition here
being composition with each matrix entry. Here, $\phi_w$ is the
M\"{o}bius transform $z \mapsto \frac{w-z}{1-\overline{w}z}.$ We
should also note here that $B(A)(w) = \int
A(z)\frac{(1-|w|^2)^2}{|1-\overline{w}z|^4}dA(z)$ by a change of
variables. Defining the normalized reproducing kernel $k_w(z)$ to be
$\frac{K_w(z)}{||K_w||},$ we obtain
$|k_w(z)|^2=\frac{(1-|w|^2)^2}{|1-\overline{w}z|^4}.$
\end{defn}
Here is our first main result:

\begin{thm}\label{eps}

If for some $\epsilon > 0$ the trace of the matrix
$B((F^*F)^{\frac{2+\epsilon}{2}})(w)B((G^*G)
^{\frac{2+\epsilon}{2}})(w)$ is uniformly bounded for all $w\in
\mathbb{D}$, then the Toeplitz product $T_FT_{G^*}$ is bounded
$L_a^2(\mathbb{C}^n) \rightarrow L_a^2(\mathbb{C}^n)$.

We also have the following condition: If there exists $\epsilon > 0$
such that
\[\left(\int_\mathbb{D}\left\{\int_{\mathbb{D}}(Ctr(G(z)F(x)^*F(x)G(z)^*)
)^{\frac{2+\epsilon}{2}}|k_w(z)|^2dA(z)
\right\}|k_w(x)|^2dA(x)\right)^{\frac{1}{2+\epsilon}}\] is uniformly
bounded, then the Toeplitz product
$T_FT_{G^*}:L_a^2(\mathbb{C}^n)\rightarrow L_a^2(\mathbb{C}^n)$ is
bounded.

\end{thm}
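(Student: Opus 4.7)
My plan is to adapt the scalar-valued proof of Stroethoff--Zheng \cite{products} to the matrix symbol setting. Working with polynomial $\mathbb{C}^n$-valued $v$ (dense in $L_a^2(\mathbb{C}^n)$), I would start from the double reproducing-kernel representation
\[
(T_F T_{G^*} v)(w) = \int_{\mathbb{D}}\int_{\mathbb{D}} \frac{F(z)\, G(u)^* v(u)}{(1-\bar z w)^2 (1-\bar u z)^2}\, dA(u)\, dA(z)
\]
obtained by iterating the integral formula for $P$ entry-by-entry, and reduce to a scalar estimate using the Frobenius/trace bound $\|F(z)G(u)^* v(u)\|_{\mathbb{C}^n}^2 \leq \mathrm{tr}(G(u)F(z)^*F(z)G(u)^*)\,\|v(u)\|^2$.

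The main analytic work would then be to apply Cauchy--Schwarz in $L^2(dA)$ separately in the $u$ and $z$ variables, multiplying and dividing by appropriate powers of $(1-|u|^2)$ and $(1-|z|^2)$ so that factors of $|k_w(u)|^2$ and $|k_w(z)|^2$ appear as the integration weights; then to apply H\"older's inequality with exponent $(2+\epsilon)/2$ and conjugate exponent $(2+\epsilon)/\epsilon$ in each of these $|k_w|^2$-weighted integrals. This produces the trace quantity raised to the power $(2+\epsilon)/2$ and integrated against $|k_w|^2\,dA$, which is precisely $B((F^*F)^{(2+\epsilon)/2})(w)$ and $B((G^*G)^{(2+\epsilon)/2})(w)$ after using cyclicity of the trace and the positivity of $(F^*F)^{\alpha}$, $(G^*G)^{\alpha}$. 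The role of $\epsilon>0$ is to leave a finite conjugate exponent against the M\"obius kernel factors so that the residual integrals converge by a standard Forelli--Rudin type estimate. Integrating in $w$ and performing the M\"obius change of variable $z\mapsto\phi_w(z)$ recovers $\|v\|^2$ from the $v$-dependent contribution, and the uniform trace bound in the hypothesis closes the estimate.

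For the second, combined condition, the plan is to stop just before splitting the double integral by H\"older. Keeping the joint quantity $\mathrm{tr}(G(z)F(x)^*F(x)G(z)^*)^{(2+\epsilon)/2}$ inside the full $|k_w(z)|^2 |k_w(x)|^2$-weighted double integral, the uniform bound on its $(2+\epsilon)$-th root gives the estimate directly. The principal obstacle will be non-commutativity of matrix multiplication: one must arrange the factors using trace cyclicity and the submultiplicativity of the Frobenius norm so that the product of \emph{separate} Berezin transforms emerges, rather than a joint Berezin transform of $F^*FG^*G$, which would correspond to a substantially stronger and less natural hypothesis. The other bookkeeping concern is tracking the exponents on the M\"obius kernel $(1-\bar w z)^{-2}$ through the two H\"older applications so that every residual reproducing-kernel integral is absolutely convergent via the Forelli--Rudin estimates.
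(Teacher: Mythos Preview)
Your proposal has a genuine structural gap: you never invoke the Bergman-space inner product decomposition
\[
\langle f,g\rangle = 3\int_{\mathbb{D}} (1-|w|^2)^2 \langle f(w),g(w)\rangle\, dA(w) + \tfrac{1}{2}\int_{\mathbb{D}} (1-|w|^2)^2 \langle f'(w),g'(w)\rangle\, dA(w) + \tfrac{1}{3}\int_{\mathbb{D}} (1-|w|^2)^3 \langle f'(w),g'(w)\rangle\, dA(w),
\]
which is the engine of the Stroethoff--Zheng argument and of the paper's proof. The paper estimates the bilinear form $\langle T_{F^*}u, T_{G^*}v\rangle$ through this identity; the pointwise quantities $T_{F^*}(u)(w)$ and $T_{G^*}(v)(w)$ are each single integrals against $\overline{K_w}$, so that their $\mathbb{C}^n$-inner product is naturally a double integral against $|k_w(z)|^2|k_w(x)|^2$ up to the factor $(1-|w|^2)^{-2}$, and the weight $(1-|w|^2)^2$ supplied by the decomposition formula exactly cancels this. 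That is how the Berezin transforms $B(\,\cdot\,)(w)$ enter.

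Your iterated representation $(T_FT_{G^*}v)(w)=\int\!\!\int F(z)G(u)^*v(u)\,(1-\bar z w)^{-2}(1-\bar u z)^{-2}\,dA(u)\,dA(z)$ has a different kernel structure: the $u$-variable is coupled to $z$, not to $w$, so after Cauchy--Schwarz or H\"older in $u$ you obtain integrals weighted by $|k_z(u)|^2$, not $|k_w(u)|^2$. The claim that ``factors of $|k_w(u)|^2$ and $|k_w(z)|^2$ appear as the integration weights'' is therefore not justified by the manipulations you describe, and the hypothesis of the theorem (a uniform bound on a quantity depending on $w$ through both $|k_w(z)|^2$ and $|k_w(x)|^2$) cannot be reached this way. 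The M\"obius change of variable you mention at the end does not repair this; in the paper the $\|u\|,\|v\|$ factors are recovered instead from the $L^p$-boundedness of the operator $P_0f(w)=\int |1-\bar w z|^{-2} f(z)\,dA(z)$ applied to $\|u\|_{\mathbb{C}^n}^\delta$ and $\|v\|_{\mathbb{C}^n}^\delta$, another ingredient absent from your outline.

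A secondary point: the passage from $(\mathrm{tr}\, G F^* F G^*)^{(2+\epsilon)/2}$ to $\mathrm{tr}\big((G^*G)^{(2+\epsilon)/4}(F^*F)^{(2+\epsilon)/2}(G^*G)^{(2+\epsilon)/4}\big)$, which is what actually produces \emph{separate} Berezin transforms of the matrix powers, does not follow from trace cyclicity and positivity alone. It requires an operator-monotone/operator-convex inequality for the map $A\mapsto A^r$ (the paper uses Theorem~IX.2.10 of Bhatia). You flag non-commutativity as an obstacle but do not supply the needed inequality.
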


Here is the necessary condition:
\begin{thm}\label{trace}
If the product of Toeplitz operators $T_FT_{G^*}$ is bounded, then
the trace of the matrix $B(F^*F)(w)B(G^*G)(w)$ is uniformly bounded
for  $w\in\mathbb{D}.$
\end{thm}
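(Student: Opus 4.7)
The plan is to test the boundedness of $T_F T_{G^*}$ on vector-valued analytic functions built from the normalized Bergman reproducing kernel $k_w$. The key scalar identity is that, for any analytic $g \in L^2_a$, one has $T_{\bar g} k_w = \overline{g(w)}\, k_w$; this follows from the direct calculation $P(\bar g K_w) = \overline{g(w)} K_w$ using the reproducing property of $K_w$, followed by normalization. Applying this identity entrywise to $G^*$, for any constant vector $v \in \mathbb{C}^n$ we obtain
\[
T_{G^*}(k_w v) = k_w\, G(w)^* v.
\]
Since $F$ has analytic entries, $T_F$ acts as pure multiplication on analytic vector fields, so $T_F(k_w u) = F k_w u$ for any constant $u \in \mathbb{C}^n$. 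Combining,
\[
T_F T_{G^*}(k_w v)(z) = F(z)\, k_w(z)\, G(w)^* v,
\]
whose squared $L^2_a(\mathbb{C}^n)$-norm equals
\[
\int |k_w(z)|^2 \langle F^*(z) F(z) G(w)^* v, G(w)^* v \rangle\, dA(z) = \langle G(w) B(F^*F)(w) G(w)^* v, v \rangle.
\]

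Since $\|k_w v\|^2 = \|v\|^2$, the boundedness of $T_F T_{G^*}$ yields the matrix inequality $G(w) B(F^*F)(w) G(w)^* \leq \|T_F T_{G^*}\|^2\, I_n$; taking traces gives $\mathrm{tr}(G(w) B(F^*F)(w) G(w)^*) \leq n\|T_F T_{G^*}\|^2$. The symmetric argument applied to the adjoint $(T_F T_{G^*})^* = T_G T_{F^*}$ produces the corresponding inequality with $F$ and $G$ interchanged.

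The hard part will be upgrading these pointwise-valued bounds, which involve only $|F(w)|^2$ and $|G(w)|^2$, to the trace inequality for $B(F^*F)(w) B(G^*G)(w)$ with full matrix Berezin transforms on both sides. My approach would use the M\"obius-invariance identity $U_w T_F T_{G^*} U_w = T_{F \circ \phi_w} T_{(G \circ \phi_w)^*}$, extended entrywise from the scalar case $U_w T_g U_w = T_{g \circ \phi_w}$, to reduce to the point $w = 0$, and then test on the richer family $h_j(z) = G(z) B(G^*G)(0)^{-1/2} e_j$ for $j = 1, \ldots, n$. A direct calculation using the explicit Bergman projection of mixed holomorphic/antiholomorphic monomials gives $\|h_j\|^2 = 1$ and $T_{G^*}(h_j)(0) = B(G^*G)(0)^{1/2} e_j$. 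Summing $\sum_j \|T_F T_{G^*} h_j\|^2 \leq n \|T_F T_{G^*}\|^2$ and recasting the sum as an integral of a Hilbert--Schmidt trace reduces the problem to a mean-value estimate for the analytic matrix-valued integrand $F \cdot T_{G^*}[G B(G^*G)(0)^{-1/2}]$. Arranging this estimate so that $B(F^*F)(0)$ (rather than merely the pointwise value $|F(0)|^2$) appears on the left-hand side is the delicate step at the heart of the argument.
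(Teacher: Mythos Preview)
Your proposal has a genuine gap, which you yourself flag but do not close. Testing $T_FT_{G^*}$ on $k_w v$ gives $G(w)\,B(F^*F)(w)\,G(w)^* \le \|T_FT_{G^*}\|^2 I_n$ (and the dual bound from the adjoint), but you need $B(G^*G)(w)$, not $G(w)^*G(w)$, on that side. Your second test on $h_j = G\,B(G^*G)(0)^{-1/2}e_j$ does not fix this: writing $\Phi = P(G^*G)\,B(G^*G)(0)^{-1/2}$ (so $\Phi(0)=B(G^*G)(0)^{1/2}$), one gets
\[
\sum_j\|T_FT_{G^*}h_j\|^2 \;=\; \int_{\mathbb{D}} \mathrm{tr}\bigl(\Phi(z)^*F(z)^*F(z)\Phi(z)\bigr)\,dA(z),
\]
and extracting $B(F^*F)(0)$ from this would require a \emph{lower} mean-value inequality for $F^*F$, whereas subharmonicity of $z\mapsto\|F(z)e\|^2$ runs the other way. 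Any test of $T_FT_{G^*}$ on a single function forces at least one of $F,G$ to be evaluated pointwise rather than averaged, so a pure testing strategy cannot produce both Berezin transforms at once without a new idea. (There is also a minor issue: your $h_j$ presupposes that $B(G^*G)(0)$ is invertible, which is not part of the hypothesis.)

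The paper's argument avoids testing altogether. It uses the Park identity
\[
F\otimes G \;=\; T_FT_{G^*}\;-\;2\,T_zT_FT_{G^*}T_{\overline z}\;+\;T_z^2T_FT_{G^*}T_{\overline z}^2,
\]
which expresses the finite-rank operator $F\otimes G$ as a finite combination of $T_FT_{G^*}$ flanked by contractions. Separately, a direct trace computation (Lemma~\ref{traceop}) shows that $\|(F\circ\phi_w)\otimes(G\circ\phi_w)\|_{op}^2$ is comparable to $\mathrm{tr}\bigl(B(F^*F)(w)\,B(G^*G)(w)\bigr)$. Combining the Park identity with the M\"obius conjugation $T_{F\circ\phi_w}=U_wT_FU_w^*$ (which you already noted) and the triangle inequality gives the uniform bound immediately. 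The key point you are missing is that the trace of the two Berezin transforms is not a testing quantity at all: it is the squared operator norm of a specific finite-rank operator, and the Park identity is precisely what ties that operator to $T_FT_{G^*}$.
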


The next theorem is the other main result presented here, involving
a characterization of bounded and invertible Toeplitz products.

\begin{thm}\label{final}

  \item The Toeplitz product $T_FT_{G^*}$ is bounded and invertible
if and only if the trace of the matrix $B(F^*F)(w)B(G^*G)(w)$ is
uniformly bounded and there exists $\eta
> 0$ with $(FG^*GF^*)(z) >
\eta I$ for all $z \in \mathbb{D}.$ This last inequality is a matrix
inequality.

\end{thm}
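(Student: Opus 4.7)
The two implications separate into an easier necessity and a harder sufficiency; the bridge in both cases is the identity
\begin{equation*}
\bigl\langle T_FT_{G^*}(k_w\otimes v),\,k_w\otimes u\bigr\rangle \;=\; \bigl\langle F(w)G(w)^*v,\,u\bigr\rangle,
\end{equation*}
which follows because $G^*$ is antiholomorphic, so $T_{G^*}(k_w\otimes v)=k_w\otimes G(w)^*v$, and then $F\cdot k_w\cdot G(w)^*v$ is already in $L^2_a(\mathbb{C}^n)$ so the outer projection acts as the identity. Finally, the reproducing property gives the $F(w)G(w)^*$ on the right-hand side.

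\textbf{Necessity.} The uniform bound on $\operatorname{tr}(B(F^*F)(w)B(G^*G)(w))$ is exactly Theorem~\ref{trace}, which requires only boundedness. For the matrix lower bound, set $C=\|(T_FT_{G^*})^{-1}\|$. Given $u\in\mathbb{C}^n$ and $w\in\mathbb{D}$, invertibility provides $g=g_{w,u}\in L^2_a(\mathbb{C}^n)$ with $T_FT_{G^*}g=k_w\otimes u$ and $\|g\|\le C\|u\|$. Pairing with $k_w\otimes u$ and transferring $T_FT_{G^*}$ to its adjoint $T_GT_{F^*}$ (for which the same kernel identity applies), one obtains $\|u\|^2=\langle g(w),G(w)F(w)^*u\rangle/\|K_w\|$. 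Combined with the standard point-evaluation estimate $\|g(w)\|\le \|K_w\|\,\|g\|$, this rearranges to $\|u\|\le C\|G(w)F(w)^*u\|$, i.e.\ $F(w)G(w)^*G(w)F(w)^*\ge C^{-2}I$, as required.

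\textbf{Sufficiency.} Assume now (i) and (ii). The pointwise subharmonicity estimate $F(w)^*F(w)\le B(F^*F)(w)$, together with the monotonicity $\operatorname{tr}(AB)\le \operatorname{tr}(A'B')$ for $0\le A\le A'$, $0\le B\le B'$, yields $\|FG^*\|_\infty^2<\infty$, so that by (ii) both $FG^*$ and $(FG^*)^{-1}$ lie in $L^\infty(\mathbb{D},M_n)$. To deduce boundedness of $T_FT_{G^*}$ I would verify the hypothesis of Theorem~\ref{eps} (the $(2+\varepsilon)/2$-form): the pointwise invertibility of $FG^*$ supplies the extra integrability needed to boost (i), via a factorization of $F$ through $FG^*$ in the spirit of Zheng--Stroethoff \cite{products2}. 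For invertibility, the kernel identity shows that the quadratic form of $(T_FT_{G^*})^*T_FT_{G^*}$ on the subspace $\{k_w\otimes v\}$ reproduces the matrix $F(w)G(w)^*G(w)F(w)^*\ge \eta I$; extending this lower bound to all of $L^2_a(\mathbb{C}^n)$ uses density of vector-valued kernel combinations together with a quantitative error estimate controlled by (i).

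\textbf{Main obstacle.} The difficulty is in the sufficiency, specifically passing from the symbolic matrix inequality $FG^*GF^*\ge \eta I$ to the operator inequality $(T_FT_{G^*})^*T_FT_{G^*}\ge \eta' I$ on the whole of $L^2_a(\mathbb{C}^n)$. In the scalar case of \cite{products2} this rests on a Schur-type estimate that is manifestly multiplicative; the matrix generalization requires careful bookkeeping of noncommutativity and of the matrix-valued Berezin transform, which is where I expect most of the technical work to sit.
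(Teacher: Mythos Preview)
Your necessity argument contains a genuine gap. After moving to the adjoint you claim
\[
\|u\|^2=\bigl\langle g,\,T_GT_{F^*}(k_w\otimes u)\bigr\rangle=\bigl\langle g(w),\,G(w)F(w)^*u\bigr\rangle/\|K_w\|,
\]
but $T_GT_{F^*}(k_w\otimes u)(z)=G(z)k_w(z)F(w)^*u$, so the pairing is $\int\langle G(z)^*g(z),F(w)^*u\rangle\overline{k_w(z)}\,dA(z)$. Since $G^*g$ is not analytic, this does \emph{not} collapse to a point value of $g$; what you actually get is $\langle (T_{G^*}g)(w),F(w)^*u\rangle/\|K_w\|$, and you have no control on $\|T_{G^*}g\|$ from boundedness of $T_FT_{G^*}$ alone. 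The paper avoids this by feeding $k_w\otimes\mathbf e$ \emph{into} $T_FT_{G^*}$ (not its inverse): one computes $\|T_FT_{G^*}(k_w\otimes\mathbf e)\|^2=\langle G(w)B(F^*F)(w)G^*(w)\mathbf e,\mathbf e\rangle$, so bounded-below gives $G(w)B(F^*F)(w)G^*(w)\ge\eta I$, and similarly $F(w)B(G^*G)(w)F^*(w)\ge\eta I$ from the adjoint. These two inequalities are then combined \emph{with} the trace bound through a short chain of L\"owner-order manipulations to force $\operatorname{tr}\bigl(G^{*-1}F^{-1}F^{*-1}G^{-1}\bigr)(w)$ to be uniformly bounded, hence $FG^*GF^*\ge\eta'I$. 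Note in particular that the trace hypothesis is used in this direction, not only Theorem~\ref{trace}.

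Your sufficiency sketch misses the engine of the proof. The phrase ``factorization of $F$ through $FG^*$ in the spirit of \cite{products2}'' does not produce the $\varepsilon$ in Theorem~\ref{eps}; that $\varepsilon$ comes from a reverse H\"older inequality, and obtaining it is the bulk of Section~3. The paper's route is: hypotheses (i)+(ii) imply $\|B((F^*F)^{-1})(w)^{1/2}B(F^*F)(w)^{1/2}\|$ is uniformly bounded (Lemma~3.2), hence $F^*F$ satisfies a matrix $A_2$ condition over dyadic rectangles (Lemma~\ref{ap}); this forces the scalar weight $\operatorname{tr}(JF^*FJ)$ to satisfy scalar $A_2$ with constant independent of the positive matrix $J$ (Lemmas~3.7--3.9), which via a Calder\'on--Zygmund stopping-time argument yields the reverse H\"older estimate of Corollary~\ref{revhol2}. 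Applying this twice (once for $F$, once for $G$) verifies the integral hypothesis of Theorem~\ref{eps}, giving boundedness. Finally, invertibility is much simpler than your density/error-estimate plan: since $(FG^*)^{-1}\in L^\infty$ by (ii), the bounded operator $T_{G^{*-1}}T_{F^{-1}}$ is an explicit two-sided inverse (Lemma~\ref{inverse}).
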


\section{Bounded Toeplitz Products}
\subsection{A Sufficient Condition(Proof of Theorem \ref{eps})}

The technique in \cite{products} for showing a sufficient condition
on the boundedness of a Toeplitz product involves an inner product
formula that easily generalizes to the vector valued case. So for
$g,f \in L_a^2(\mathbb{C}^n)$

\[ \left\langle
f,g\right\rangle_{L_a^2(\mathbb{C}^n)} = \int_\mathbb{D}
\left\langle f(z),g(z) \right\rangle_{\mathbb{C}^n} dA(z) \]\[=
3\int_\mathbb{D}(1-|z|^2)^2\left\langle f(z),g(z)
\right\rangle_{\mathbb{C}^n}dA(z) +\]\[
\frac{1}{2}\int_\mathbb{D}(1-|z|^2)^2\left\langle f'(z),g'(z)\right
\rangle_{\mathbb{C}^n}dA(z) +
\frac{1}{3}\int_\mathbb{D}(1-|z|^2)^3\left\langle f'(z),g'(z)\right\rangle_{\mathbb{C}^n}dA(z).\]\\

So to estimate the norm of $T_GT_F^*$, we will look at the inner
product \\$\left\langle
T_GT_F^*u,v\right\rangle_{L^2_a(\mathbb{C}^n)}$ with $v,u \in
L^2_a(\mathbb{C}^n)$ in the form just given.

Let us start by estimating the term $\left\langle
T_{F^*}(u)(w),T_{G^*}(v)(w)\right\rangle_{\mathbb{C}^n}$.

\begin{defn}
For $f,g \in L^2(\mathbb{D})$ , define the rank 1 operator $f
\otimes g:L^2(\mathbb{D})\rightarrow L^2(\mathbb{D})$ by
\[(f \otimes g)h = \left\langle h,g\right\rangle f\] for $h \in L^2(\mathbb{D}).$

Also for $F,G \in M_{n \times n}(L^2(\mathbb{D}))$, define the
operator $F \otimes G:L^2(\mathbb{D},\mathbb{C}^n)\rightarrow
L^2(\mathbb{D},\mathbb{C}^n)$ by
\[(F \otimes G) h =
\begin{pmatrix}\sum_i f_{1i} \otimes g_{1i} & \sum_i f_{1i}\otimes
g_{2i} & \ldots& \sum_i
f_{1i} \otimes g_{ni}\ \\\sum_i f_{2i} \otimes g_{1i}&\ldots \\\vdots&\ddots  \\  &\\
\sum_i f_{ni} \otimes g_{1i} &\sum_i f_{ni} \otimes g_{2i} &\ldots
&\sum_i f_{ni} \otimes g_{ni}
\end{pmatrix}h\] for $h \in L^2(\mathbb{D},\mathbb{C}^n).$
\end{defn}

\begin{thm}\label{1st}
\[\left\langle T_{F^*}(u)(w),T_{G^*}(v)(w)\right\rangle_{\mathbb{C}^n} =
\frac{1}{(1-|w|^2)^2} \int_{\mathbb{D}}\left\langle (Gk_w \otimes
Fk_w )u(z) , v(z)\right\rangle_{\mathbb{C}^n}dA(z),\] where $k_w$ is
the normalized reproducing kernel.
\end{thm}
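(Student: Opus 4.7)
The plan is to compute both sides directly and equate them term by term. The two ingredients are the scalar reproducing formula $Ph(w) = \langle h, K_w\rangle$, applied componentwise, together with the identity $k_w = (1-|w|^2)K_w$, which follows from $\|K_w\|_{L^2_a}^2 = K_w(w) = (1-|w|^2)^{-2}$ and accounts for the $(1-|w|^2)^{-2}$ prefactor.

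For the left-hand side, since $(F^*u)_l = \sum_m \overline{f_{ml}}\,u_m$, the Bergman reproducing property gives componentwise
\[(T_{F^*}u)_l(w) = \sum_m \int_{\mathbb{D}}\overline{f_{ml}(y)}\,u_m(y)\,\overline{K_w(y)}\,dA(y),\]
and likewise for $(T_{G^*}v)_l(w)$. Multiplying, conjugating the second factor, and summing over $l$ expresses the inner product as
\[\sum_{l,m,i}\iint_{\mathbb{D}\times\mathbb{D}}\overline{f_{ml}(y)}\,u_m(y)\,g_{il}(z)\,\overline{v_i(z)}\,\overline{K_w(y)}\,K_w(z)\,dA(y)\,dA(z).\]

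For the right-hand side, the matrix tensor definition gives $(Gk_w \otimes Fk_w)_{lm} = \sum_i (g_{li}k_w)\otimes(f_{mi}k_w)$, which sends $u_m$ to $\langle u_m, f_{mi}k_w\rangle\,g_{li}k_w$. Forming $\langle (Gk_w\otimes Fk_w)u(z), v(z)\rangle_{\mathbb{C}^n}$, integrating in $z$, and then renaming the dummy indices $l \leftrightarrow i$, one obtains exactly the same integrand as above except with $K_w$ replaced by $k_w$ in both slots. Pulling out $k_w = (1-|w|^2)K_w$ twice contributes the factor $(1-|w|^2)^2$, and dividing through delivers the claimed identity. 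The one point requiring care is the index matching: the definition of $F \otimes G$ contracts on the \emph{second} subscript of each matrix factor, and it is precisely the passage to $F^*$ and $G^*$ inside the Toeplitz operators that swaps subscripts so the two index patterns agree. Taking $u,v$ polynomial ensures absolute convergence so that Fubini applies throughout without comment.
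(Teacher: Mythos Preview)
Your proof is correct and follows essentially the same route as the paper's: both expand $T_{F^*}u(w)$ and $T_{G^*}v(w)$ via the reproducing-kernel integral, form the $\mathbb{C}^n$ inner product as a double integral, normalize $K_w$ to $k_w$ to produce the $(1-|w|^2)^{-2}$ factor, and identify the result with the definition of $Gk_w\otimes Fk_w$. The only difference is presentational --- the paper keeps everything in matrix form (writing the integrand as $G(\zeta)K_w(\zeta)(F(z)K_w(z))^*$), whereas you unpack the same computation componentwise and track the index contraction explicitly.
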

\begin{proof}

\[\left\langle T_{F^*}(u)(w),T_{G^*}(v)(w)\right\rangle_{\mathbb{C}^n} \]

\[=\left\langle
\int_{\mathbb{D}}F^*(z)u(z)\overline{K_w(z)}dA(z),\int_{\mathbb{D}}G^*(\zeta)v(\zeta)\overline{K_w(\zeta)}dA(\zeta)\right\rangle_{\mathbb{C}^n}
\]

\[=\int_{\mathbb{D}}\int_{\mathbb{D}}\left\langle
G(\zeta)K_w(\zeta)\left(F(z)K_w(z)\right)^*u(z),v(\zeta)\right\rangle_{\mathbb{C}^n}dA(z)dA(\zeta)
\]

\[=\frac{1}{(1-|w|^2)^2}\int_{\mathbb{D}}\int_{\mathbb{D}}\left\langle
G(\zeta)k_w(\zeta)\left(F(z)k_w(z)\right)^*u(z),v(\zeta)\right\rangle_{\mathbb{C}^n}dA(z)dA(\zeta)
\]

\[= \frac{1}{(1-|w|^2)^2}  \int_{\mathbb{D}}\left\langle (Gk_w \otimes Fk_w u)(\zeta) ,
v(\zeta)\right\rangle_{\mathbb{C}^n}dA(\zeta).\]
\end{proof}

\begin{lem}\label{traceop}
 \[||(F \otimes G) (G \otimes F)||_{op} \sim  trace\{(F \otimes G) (G \otimes F)\} = \sum_{q=1}^n
\sum_{m=1}^n\sum_{r=1}^n\sum_{l=1}^n\left\langle f_{qr},
f_{ql}\right\rangle_{L^2}\left\langle
g_{ml},g_{mr}\right\rangle_{L^2}.\]
\end{lem}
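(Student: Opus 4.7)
The plan is to recognize $F\otimes G$ as a sum of $n$ genuine rank-one operators on the Hilbert space $L^2(\mathbb{D},\mathbb{C}^n)$. Once this is done, $(F\otimes G)(G\otimes F)$ becomes a positive self-adjoint operator of rank at most $n$, and the norm-trace equivalence reduces to a standard linear algebra fact.

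First, for $1\le i\le n$ let $F_i=(f_{1i},\ldots,f_{ni})^T$ and $G_i=(g_{1i},\ldots,g_{ni})^T$ denote the $i$-th columns of $F$ and $G$, regarded as elements of $L^2(\mathbb{D},\mathbb{C}^n)$. Unwinding the matrix definition of $F\otimes G$, I would verify the identification
\[
F\otimes G \;=\; \sum_{i=1}^n F_i\otimes G_i,
\]
where now $F_i\otimes G_i$ is the honest rank-one operator $h\mapsto \langle h,G_i\rangle_{L^2(\mathbb{C}^n)}\,F_i$ on $L^2(\mathbb{D},\mathbb{C}^n)$. This is essentially the only piece of bookkeeping in the argument: one checks that the $j$-th component of $(F\otimes G)h$, namely $\sum_{i,k}\langle h_k,g_{ki}\rangle f_{ji}$, agrees with the $j$-th component of $\sum_i\langle h,G_i\rangle F_i$.

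Once this identification is secured, the scalar fact $(f\otimes g)^* = g\otimes f$ gives $(F\otimes G)^* = G\otimes F$, so the operator in question is of the form $AA^*$ with $A=F\otimes G$; in particular it is positive self-adjoint of rank at most $n$. The rank-one composition rule then yields
\[
(F\otimes G)(G\otimes F) \;=\; \sum_{i,j=1}^n \langle G_j,G_i\rangle\,F_i\otimes F_j,
\]
and combining this with $\operatorname{tr}(F_i\otimes F_j)=\langle F_i,F_j\rangle$ together with the expansions $\langle F_i,F_j\rangle=\sum_q\langle f_{qi},f_{qj}\rangle$ and $\langle G_j,G_i\rangle=\sum_m\langle g_{mj},g_{mi}\rangle$, followed by the relabeling $i\mapsto r$, $j\mapsto l$, reproduces the stated quadruple sum.

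Finally, the equivalence $\sim$ is the general observation that for a positive operator $T$ of rank at most $n$ with eigenvalues $\lambda_1\ge\cdots\ge\lambda_n\ge 0$,
\[
\|T\|_{op} \;=\; \lambda_1 \;\le\; \sum_{k=1}^n\lambda_k \;=\; \operatorname{tr}(T) \;\le\; n\,\|T\|_{op},
\]
with constants depending only on $n$. The only real obstacle is keeping the indices straight in the first step, where one must carefully distinguish matrix row/column indices from Hilbert space component indices; after the identification $F\otimes G=\sum_i F_i\otimes G_i$ is in hand, the rest is automatic.
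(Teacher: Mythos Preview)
Your proposal is correct and arguably more transparent than the paper's own argument. The paper proceeds by brute force: it writes $F\otimes G$ as an $n\times n$ block matrix of scalar rank-one operators, multiplies out $(F\otimes G)(G\otimes F)$ entrywise, and then computes the trace by expanding against the concrete orthonormal basis $e_{p,q}=(0,\ldots,z^p\sqrt{p+1},\ldots,0)$ of $L^2_a(\mathbb{C}^n)$, invoking Parseval's identity at the end to collapse the infinite sum over $p$. Your column identification $F\otimes G=\sum_{i=1}^n F_i\otimes G_i$ bypasses all of this: once the operator is recognized as $AA^*$ with $A$ a sum of $n$ genuine rank-one operators on $L^2(\mathbb{D},\mathbb{C}^n)$, the trace formula drops out of the standard identities $(f\otimes g)(h\otimes k)=\langle h,g\rangle\,f\otimes k$ and $\operatorname{tr}(f\otimes g)=\langle f,g\rangle$ with no basis computation at all. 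Your route also makes the positivity of $(F\otimes G)(G\otimes F)$ explicit, which is exactly what is needed for the norm--trace equivalence $\|T\|\le\operatorname{tr}(T)\le n\|T\|$; the paper's one-line justification (``finite rank, so the trace is an equivalent norm'') is strictly speaking incomplete without that observation.
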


\begin{proof}

As $(F \otimes G)(G \otimes F)$ is of finite rank the trace of $(F
\otimes G) (G \otimes F)$ will be an equivalent norm. We can express
$F \otimes G$ as a matrix of operators on the scalar Bergman space
with the entries $[ \sum_{l=1}^n f_{il} \otimes g_{jl} ]_{i,j}$. We
can then express $(F \otimes G)(G \otimes F)$ in a similar manner;

\[ \left[\sum_{m=1}^n \left(\sum_{l=1}^n f_{il} \otimes g_{ml}\right)\left(\sum_{l=1}^n g_{ml} \otimes
f_{jl}\right)\right]_{i,j}\]\[=
\left[\sum_{m=1}^n\left(\sum_{r=1}^n\sum_{l=1}^n \left\langle \cdot
, f_{jl}\right\rangle_{L^2}\left\langle
g_{ml},g_{mr}\right\rangle_{L^2}f_{ir}\right)\right]_{i,j}.\] Noting
that we have as an orthonormal basis $e_{l,m} = (0, \ldots,
0,z^l\sqrt{l+1},0, \ldots )$ i.e. a vector with each coordinate $0$
apart from the mth entry which is the lth orthonormal basis element
of the scalar valued Bergman space. So the trace of the operator $(F
\otimes G)(G \otimes F)$ will be

\[ \sum_{p,q} \left\langle (F \otimes G)(G \otimes F) e_{p,q} , e_{p,q} \right\rangle \]\[= \sum_{q=1}^n \sum_{p=1}^\infty \sum_{m=1}^n\left(\sum_{r=1}^n\sum_{l=1}^n \left\langle  z^p\sqrt{1+p} ,
f_{ql}\right\rangle_{L^2}\left\langle
g_{ml},g_{mr}\right\rangle_{L^2}\int_\mathbb{D}f_{qr}(z)\overline{z^p\sqrt{1+p}}dA(z)\right).\]

We can write each $f_{ij}$ as a power series $\sum_{s=1}^\infty
a_{s,ij} z^s\sqrt{1+s}$ and thus this trace becomes

\[ \sum_{q=1}^n \sum_{p=1}^\infty \sum_{m=1}^n\left(\sum_{r=1}^n\sum_{l=1}^n \overline{a_{p,ql}}\left\langle g_{ml},g_{mr}\right\rangle_{L^2}a_{p,qr}\right)\]

and thus by Parseval's identity the expression for the trace
becomes;

\[ \sum_{q=1}^n  \sum_{m=1}^n\left(\sum_{r=1}^n\sum_{l=1}^n\left\langle f_{qr},
f_{ql}\right\rangle_{L^2}\left\langle
g_{ml},g_{mr}\right\rangle_{L^2}\right)\]
\end{proof}

\begin{thm}
$||Gk_w \otimes Fk_w||_{op} \approx (tr
(B(G^*G)(w)B(F^*F)(w)))^{\frac{1}{2}}.$

\end{thm}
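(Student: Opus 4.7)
The plan is to apply Lemma~\ref{traceop} with the matrices $F$ and $G$ of the lemma replaced by $Gk_w$ and $Fk_w$ respectively (here $k_w$ is scalar, so $(Gk_w)_{ij}=g_{ij}k_w$ and similarly for $F$). The first step is to verify entry by entry the adjoint identity $(F\otimes G)^* = G\otimes F$, using only the scalar fact $(f\otimes g)^* = g\otimes f$. With this identity, the operator $(Gk_w\otimes Fk_w)(Fk_w\otimes Gk_w)$ equals $(Gk_w\otimes Fk_w)(Gk_w\otimes Fk_w)^*$, hence is positive semidefinite with operator norm equal to $\|Gk_w\otimes Fk_w\|_{op}^2$. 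Its rank is at most $n^2$, uniformly in $w$, so Lemma~\ref{traceop} yields
\[
\|Gk_w\otimes Fk_w\|_{op}^2 \ \sim\ \sum_{q,m,r,l}\bigl\langle g_{qr}k_w,\,g_{ql}k_w\bigr\rangle_{L^2}\bigl\langle f_{ml}k_w,\,f_{mr}k_w\bigr\rangle_{L^2}.
\]

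The next step is to recognise this quadruple sum as a trace of Berezin transforms. Writing the $L^2$ inner products as integrals against $|k_w|^2\,dA$ and using $(G^*G)_{lr}=\sum_q \overline{g_{ql}}g_{qr}$ together with the kernel form $B(A)(w)=\int A(z)|k_w(z)|^2\,dA(z)$ gives
\[
\sum_q\bigl\langle g_{qr}k_w,\,g_{ql}k_w\bigr\rangle_{L^2} \;=\; \int(G^*G)_{lr}(z)\,|k_w(z)|^2\,dA(z) \;=\; B(G^*G)(w)_{lr},
\]
and the symmetric computation $\sum_m\langle f_{ml}k_w,f_{mr}k_w\rangle_{L^2}=B(F^*F)(w)_{rl}$. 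Substituting, the quadruple sum collapses to
\[
\sum_{r,l}B(G^*G)(w)_{lr}\,B(F^*F)(w)_{rl} \;=\; \operatorname{tr}\bigl(B(G^*G)(w)B(F^*F)(w)\bigr),
\]
and taking square roots of both sides of the equivalence delivers the asserted statement.

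The proof is essentially routine once Lemma~\ref{traceop} is in hand, and I do not foresee a serious obstacle; the only care required is index bookkeeping, namely ensuring that the row indices of $F$ and $G$ (those summed out when one appeals to Parseval inside Lemma~\ref{traceop}) line up with the correct slots of $F^*F$ and $G^*G$, rather than with transposed or conjugated positions that would give $B(FF^*)$ or $B(GG^*)$ instead.
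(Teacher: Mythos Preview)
Your proposal is correct and follows essentially the same approach as the paper: apply Lemma~\ref{traceop} with $F,G$ replaced by $Gk_w,Fk_w$, then identify the resulting quadruple sum with $\operatorname{tr}\bigl(B(G^*G)(w)B(F^*F)(w)\bigr)$. Your explicit verification of the adjoint identity $(F\otimes G)^*=G\otimes F$ is a detail the paper leaves implicit when asserting that $\|Gk_w\otimes Fk_w\|_{op}^2$ is equivalent to the trace of $(Gk_w\otimes Fk_w)(Fk_w\otimes Gk_w)$, but otherwise the arguments are identical.
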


\begin{proof}
Using Lemma \ref{traceop} we can see that as $||Gk_w \otimes
Fk_w||_{op}$ is equivalent to the square root of the trace of the
operator $(Gk_w \otimes Fk_w)(Fk_w \otimes Gk_w)$ and this is equal
to \[ \sum_{q=1}^n
\sum_{m=1}^n\left(\sum_{r=1}^n\sum_{l=1}^n\left\langle f_{qr},
f_{ql}|k_w|^2\right\rangle_{L^2}\left\langle
g_{ml},g_{mr}|k_w|^2\right\rangle_{L^2}\right) =
tr(B(G^*G)(w)B(F^*F)(w))\] we immediately have our result.

\end{proof}
\begin{defn}
The operator $P_0$ defined on $L_p(\mathbb{D})$ is the operator that
sends $f \in L^2$ to the function given by $(P_0f)(w) =
\int_{\mathbb{D}} \frac{f(z)}{|1-\overline{w}z|^2}dA(z).$
\end{defn}

Elements from the following two theorems are borrowed from Theorem
3.2 in \cite{products}.
\begin{lem}\label{basic}
If we have a scalar valued integrable function $h$ and a scalar
valued Bergman space function $v$ then for each $w \in \mathbb{D}$,
\[\int_{\mathbb{D}}\left|
\frac{\overline{xh(x)}|v(x)|}{(1-\overline{x}w)^3}\right|dA(x) \leq
2 \left\{\int_{\mathbb{D}}
|h(x)|^{2+\epsilon}\frac{|k_w(x)|^2}{1-|w|^2}dA(x)\right\}^{\frac{1}{2+\epsilon}}\left\{(P_0|v|^{\delta})(w)\right\}^{\frac{1}{\delta}}.\]
Here, for $\epsilon > 0$, $\delta = \frac{2+\epsilon}{1+\epsilon}$.
\end{lem}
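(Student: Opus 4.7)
The natural approach is a direct application of H\"older's inequality with the conjugate exponent pair $(p,q)=(2+\epsilon,\delta)$; since $\tfrac{1}{p}+\tfrac{1}{q}=\tfrac{1}{2+\epsilon}+\tfrac{1+\epsilon}{2+\epsilon}=1$, the definition $\delta=\tfrac{2+\epsilon}{1+\epsilon}$ is engineered precisely to make H\"older applicable here.

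I would split the integrand $|xh(x)v(x)|/|1-\overline{x}w|^3$ as a product $\phi(x)\psi(x)$, choosing $\phi$ so that $\phi(x)^p$ reproduces the integrand of the desired first factor on the right-hand side. Since $|k_w(x)|^2/(1-|w|^2)=(1-|w|^2)/|1-\overline{x}w|^4$, this forces
\[\phi(x)=|h(x)|\,\frac{(1-|w|^2)^{1/p}}{|1-\overline{x}w|^{4/p}},\qquad \psi(x)=\frac{|x|\,|v(x)|}{(1-|w|^2)^{1/p}\,|1-\overline{x}w|^{3-4/p}},\]
so that $\phi\psi=|x||h||v|/|1-\overline{x}w|^3$ and $\|\phi\|_p$ becomes exactly the first factor on the right-hand side.

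It remains to bound $\|\psi\|_\delta$ by $2\,(P_0|v|^\delta)(w)^{1/\delta}$. Expanding the $\delta$-th power, $\|\psi\|_\delta^\delta$ involves the kernel $|1-\overline{x}w|^{-(2+3\epsilon)/(1+\epsilon)}$, whose exponent $2+\epsilon/(1+\epsilon)$ exceeds the exponent $2$ appearing in the kernel of $P_0$ by $\epsilon/(1+\epsilon)$. I would control this excess via the elementary estimate
\[|1-\overline{x}w|\;\geq\; 1-|w|\;=\;\frac{1-|w|^2}{1+|w|}\;\geq\;\frac{1-|w|^2}{2},\]
trading $|1-\overline{x}w|^{-\epsilon/(1+\epsilon)}$ for $(2/(1-|w|^2))^{\epsilon/(1+\epsilon)}$. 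The resulting $(1-|w|^2)$-factors combine with the $(1-|w|^2)^{-\delta/p}$ coming from the split, and the specific choice of $\delta$ makes the net exponent of $(1-|w|^2)$ work out so that only an overall numerical constant at most $2$ remains after taking the $\delta$-th root. The pointwise bound $|x|\leq 1$ absorbs the $|x|^\delta$ factor.

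The main obstacle is the delicate exponent bookkeeping: every $(1-|w|^2)$ and $|1-\overline{x}w|$ power has to balance exactly for the inequality to come out with the clean numerical constant $2$. The specific pairing $p=2+\epsilon$ with $\delta=(2+\epsilon)/(1+\epsilon)$ is forced by this requirement, which is the algebraic reason the lemma is stated for this particular conjugate pair and not a different one.
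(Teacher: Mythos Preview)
Your proposal is correct and follows essentially the same route as the paper: apply H\"older's inequality with the conjugate pair $(2+\epsilon,\delta)$, identify the first factor with $\int |h|^{2+\epsilon}\,|k_w|^2/(1-|w|^2)$, and control the residual powers in the second factor via the elementary bound $(1-|w|^2)/|1-\overline{x}w|\le 2$ (equivalently $|1-\overline{x}w|\ge(1-|w|^2)/2$), together with $|x|\le 1$. The paper organizes the split slightly differently---it first inserts a harmless factor $|1-\overline{x}w|/|1-\overline{x}w|$ and then applies H\"older, rather than defining $\phi,\psi$ abstractly---but the substance is identical.
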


\begin{proof}
By H\"{o}lder's inequality,
\[\int_{\mathbb{D}}\left|
\frac{\overline{xh(x)}|v(x)|}{(1-\overline{x}w)^3}\right|dA(x) \leq
\int_{\mathbb{D}}
\frac{|h(x)||1-\overline{x}w||v(x)|}{|1-\overline{x}w|^4}dA(x)\]

\[\leq\left\{\int_{\mathbb{D}}
\frac{|h(x)|^{2+\epsilon}}{|1-\overline{x}w|^4}dA(x)\right\}^{\frac{1}{2+\epsilon}}\left\{\int_{\mathbb{D}}
\frac{|1-\overline{x}w|^{\delta}|v(x)|^{\delta}}{|1-\overline{x}w|^4}dA(x)\right\}^{\frac{1}{\delta}}
 \]

\[= \left\{\int_{\mathbb{D}}
|h(x)|^{2+\epsilon}\frac{|k_w(x)|^2}{1-|w|^2}dA(x)\right\}^{\frac{1}{2+\epsilon}}
\left\{\int_{\mathbb{D}}
\frac{|1-|w|^2|^{\frac{\epsilon}{1+\epsilon}}|v(x)|^{\delta}}{|1-\overline{x}w|^2|1-\overline{x}w|^{\frac{\epsilon}{\epsilon
+ 1}}}dA(x)\right\}^{\frac{1}{\delta}},\]

and our result follows from the fact that
\[\frac{1-|w|^2}{|1-\overline{w}z|} \leq
\frac{(1-|w|)(1+|w|)}{|1-\overline{w}z|} \leq\\
\frac{(1-|w|)(1+|w|)}{|1-|w||z||} \leq
\frac{(1-|w|)(1+|w|)}{|1-|w||}\leq 1+|w|< 2.\]

\end{proof}

Let us now take a look at $\left\langle
T_{F^*}(u)'(w),T_{G^*}(v)'(w)\right\rangle_{\mathbb{C}^n}.$
\begin{thm}\label{2nd}
Let $w \in \mathbb{D}$. Then
\[
\left|\left\langle
T_{F^*}(u)'(w),T_{G^*}(v)'(w)\right\rangle_{\mathbb{C}^n}\right|\]
\begin{multline*}
 \leq
C\left(\int_\mathbb{D}\left\{\int_{\mathbb{D}}(tr(G(z)F(x)^*F(x)G(z)^*)
)^{\frac{2+\epsilon}{2}}\frac{|k_w(z)|^2}{1-|w|^2}dA(z)
\right\}\frac{|k_w(x)|^2}{1-|w|^2}dA(x)\right)^{\frac{1}{2+\epsilon}}
\\
\times\left\{(P_0||u||^{\delta}_{{\mathbb{C}}^n})(w)\right\}^{\frac{1}{\delta}}\left\{(P_0||v||_{\mathbb{C}^n}^{\delta})(w)\right\}^{\frac{1}{\delta}}
\end{multline*}
\begin{multline*}
\leq C\left(\left\{tr(B((F^*F)^{\frac{2+\epsilon}{2}})(w)B((G^*G)
^{\frac{2+\epsilon}{2}})(w))\right\}^{\frac{1}{2+\epsilon}}\right)\frac{1}{(1-|w|^2)^2}
\\
\times
\left\{(P_0||u||^{\delta}_{{\mathbb{C}}^n})(w)\right\}^{\frac{1}{\delta}}\left\{(P_0||v||_{\mathbb{C}^n}^{\delta})(w)\right\}^{\frac{1}{\delta}}
\end{multline*}

where $C$ is a constant, $\epsilon
>0$ and $\frac{1}{\delta} = 1 - \frac{1}{2+\epsilon}.$
\end{thm}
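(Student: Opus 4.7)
The starting point is to differentiate the reproducing-kernel representation under the integral sign: from $T_{F^*}u(w) = \int_\mathbb{D} F^*(z)u(z)(1-\overline{z}w)^{-2}\,dA(z)$ one obtains
\[
(T_{F^*}u)'(w) = \int_\mathbb{D} F^*(z)u(z)\frac{2\overline{z}}{(1-\overline{z}w)^3}\,dA(z),
\]
and analogously for $(T_{G^*}v)'(w)$. Expanding the $\mathbb{C}^n$ inner product into a double integral in $z,\zeta$, moving $G(\zeta)$ across by adjointness, and applying Cauchy--Schwarz on $\mathbb{C}^n$ yields
\[
|\langle F^*(z)u(z),G^*(\zeta)v(\zeta)\rangle_{\mathbb{C}^n}|\le \bigl(\mathrm{tr}\,G(\zeta)F(z)^*F(z)G(\zeta)^*\bigr)^{1/2}\|u(z)\|_{\mathbb{C}^n}\|v(\zeta)\|_{\mathbb{C}^n},
\]
which is how the trace quantity enters the estimate.

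The first bound then follows from two iterated applications of Lemma~\ref{basic}. Holding $\zeta$ fixed and taking $h(z):=(\mathrm{tr}\,G(\zeta)F(z)^*F(z)G(\zeta)^*)^{1/2}$ and $v(z):=\|u(z)\|_{\mathbb{C}^n}$, Lemma~\ref{basic} controls the $z$-integral by the $(2+\epsilon)$-power mean of $h$ against $|k_w|^2/(1-|w|^2)$ times $\{P_0\|u\|^\delta\}^{1/\delta}$. The resulting $\zeta$-expression has exactly the same shape, so a second application with $h(\zeta)$ equal to this power mean and $v(\zeta):=\|v(\zeta)\|_{\mathbb{C}^n}$ produces the claimed double-integral bound, after raising the inner $(2+\epsilon)$-th root to the $(2+\epsilon)$-th power inside the outer application and absorbing the two factors of $2$ from the lemma into $C$.

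The passage to the second bound is a pointwise trace inequality: for positive semidefinite $n\times n$ matrices $A,B$ and $p=(2+\epsilon)/2\ge 1$,
\[
\bigl(\mathrm{tr}(AB)\bigr)^p\le n^{p-1}\,\mathrm{tr}(A^p B^p).
\]
I would prove this by writing $\mathrm{tr}(AB)=\mathrm{tr}(A^{1/2}BA^{1/2})=\sum_i\nu_i$, where the $\nu_i\ge 0$ are the eigenvalues of the positive matrix $A^{1/2}BA^{1/2}$; Jensen/power-mean gives $(\sum\nu_i)^p\le n^{p-1}\sum\nu_i^p=n^{p-1}\mathrm{tr}((A^{1/2}BA^{1/2})^p)$, and the Lieb--Thirring trace inequality then gives $\mathrm{tr}((A^{1/2}BA^{1/2})^p)\le\mathrm{tr}(A^pB^p)$. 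Applying this with $A=F(x)^*F(x)$ and $B=G(z)^*G(z)$ turns the double integral into $\mathrm{tr}(B((F^*F)^{(2+\epsilon)/2})(w)B((G^*G)^{(2+\epsilon)/2})(w))$ up to a dimensional constant, by linearity of the trace together with the representation $B(A)(w)=\int A(z)|k_w(z)|^2\,dA(z)$. The residual $(1-|w|^2)$-discrepancy is harmless in the direction we need: since $2/(2+\epsilon)\le 2$ and $1-|w|^2\le 1$ we have $(1-|w|^2)^{-2/(2+\epsilon)}\le(1-|w|^2)^{-2}$, so the factor $(1-|w|^2)^{-2/(2+\epsilon)}$ produced by the first bound is dominated by the $(1-|w|^2)^{-2}$ appearing in the second.

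The main obstacle is the pointwise trace inequality above. In the scalar case ($n=1$) it is a trivial equality, and the entire statement reduces to the familiar scalar Bergman calculation; in the matrix case, the non-commutativity of $A$ and $B$ forces genuine operator-theoretic input, supplied by Lieb--Thirring, to push the power $p$ across a non-commuting product. This is the step that makes the vector-valued extension of the scalar argument work.
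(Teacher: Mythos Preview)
Your proposal is correct and follows essentially the same route as the paper: differentiate the kernel representation, bound the bilinear form by the operator/Hilbert--Schmidt norm of $G(\zeta)F(z)^*$, apply Lemma~\ref{basic} twice, and then invoke an Araki--Lieb--Thirring type inequality to pass from $(\mathrm{tr}\,AB)^p$ to $\mathrm{tr}(A^pB^p)$. The only cosmetic difference is that the paper works with the operator norm $\|G(z)F^*(x)\|_{\mathrm{op}}$ throughout and cites Bhatia's Theorem~IX.2.10 for the power inequality, whereas you pass to the trace immediately and invoke the trace form of Lieb--Thirring together with the power-mean inequality; these are equivalent ingredients, and your treatment of the residual $(1-|w|^2)$ exponent is exactly what the paper leaves implicit.
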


\begin{proof}
 First note that for a function $u \in
 L_a^2(\mathbb{D},\mathbb{C}^n)$

 \[ \left\langle u , K_w^{\prime}\right\rangle = u^{\prime}(w)\]

so that

\[|\left\langle T_{F^*}(u)'(w),T_{G^*}(v)'(w)\right\rangle
_{\mathbb{C}^n}| = |\int \int \left\langle
F^*(z)u(z)\overline{K_w^\prime(z)},
G^*(x)v(x)\overline{K_w^\prime(x)}\right\rangle_{\mathbb{C}^n}dA(z)dA(x)|\]

\[\leq |\int \int ||G(z)F^*(x)||_{\mathbb{C}^n op} ||u(z)|| ||v(x)||
\left|\frac{K_w(z)}{1-\overline{w}z}\right|
\left|\frac{K_w(x)}{1-\overline{w}x}\right|dA(z)dA(x)|\]

Then using Lemma \ref{basic} with $h(z) =
||G(z)F^*(x)||_{\mathbb{C}^n
op}||v(x)||\left|\frac{K_w(x)}{1-\overline{w}x}\right| $we arrive at
the following inequality,

\[|\left\langle T_{F^*}(u)'(w),T_{G^*}(v)'(w)\right\rangle
_{\mathbb{C}^n}| \]\[\leq 2
\left|\int_{\mathbb{D}}\left\{\int_{\mathbb{D}}
\left(||G(z)F^*(x)||_{\mathbb{C}^n
op}||v(x)||\left|\frac{K_w(x)}{1-\overline{w}x}\right|\right)^{2+\epsilon}
\frac{|k_w(z)|^2}{1-|w|^2}dA(z)\right\}^{\frac{1}{2+\epsilon}}\left\{(P_0||u||^{\delta})(w)\right\}^{\frac{1}{\delta}}dA(x)\right|\]
\[= \left|\int_{\mathbb{D}}\left(\left\{\int_{\mathbb{D}}
\left(||G(z)F^*(x)||_{\mathbb{C}^n op}\right)^{2+\epsilon}
\frac{|k_w(z)|^2}{1-|w|^2}dA(z)\right\}^{\frac{1}{2+\epsilon}}||v(x)||\left|\frac{K_w(x)}{1-\overline{w}x}\right|\right)dA(x)\left\{(P_0||u||^{\delta})(w)\right\}^{\frac{1}{\delta}}\right|\]
where $\epsilon > 0$ and $\frac{1}{\delta} = 1 -
\frac{1}{2+\epsilon}.$

Again using Lemma \ref{basic} but this time with $h(x) =
\left\{\int_{\mathbb{D}} \left(||G(z)F^*(x)||_{\mathbb{C}^n
op}\right)^{2+\epsilon}
\frac{|k_w(z)|^2}{1-|w|^2}dA(z)\right\}^{\frac{1}{2+\epsilon}}$

to see that

\[|\left\langle T_{F^*}(u)'(w),T_{G^*}(v)'(w)\right\rangle
_{\mathbb{C}^n}| \]\[\leq
4\left|\left\{\int_{\mathbb{D}}\left\{\int_{\mathbb{D}}
\left(||G(z)F^*(x)||_{\mathbb{C}^n op}\right)^{2+\epsilon}
\frac{|k_w(z)|^2}{1-|w|^2}dA(z)\right\}\frac{|k_w(x)|^2}{1-|w|^2}dA(x)\right\}^{\frac{1}{2+\epsilon}}\left\{(P_0||v||^{\delta})(w)\right\}^{\frac{1}{\delta}}\left\{(P_0||u||^{\delta})(w)\right\}^{\frac{1}{\delta}}\right|\]

\[\leq 4
\left(\int_\mathbb{D}\left\{\int_{\mathbb{D}}(Ctr(G(x)F(z)^*F(z)G(x)^*)
)^{\frac{2+\epsilon}{2}}\frac{|k_w(z)|^2}{1-|w|^2}dA(z)
\right\}\frac{|k_w(x)|^2}{1-|w|^2}dA(x)^{\frac{1}{2+\epsilon}}\right)\]

\[\times\left\{(P_0||u||^{\delta}_{{\mathbb{C}}^n})(w)\right\}^{\frac{1}{\delta}}\left\{(P_0||v||_{\mathbb{C}^n}^{\delta})(w)\right\}^{\frac{1}{\delta}}\]

(This is what we want but we can go a step further and get something
that looks even more similar to the analogous result in the scalar
case.) Letting the $4$ be absorbed into the constant $C$ and using
the inequality on matrix norms from \cite{Bhatia}, theorem IX.2.10
on page 258, we see that

\[\leq
\int_\mathbb{D}\left\{\int_{\mathbb{D}}(C||(G(x)F(z)^*F(z)G(x)^*)||_{\mathbb{C}^nop}
)^{\frac{2+\epsilon}{2}}\frac{|k_w(z)|^2}{1-|w|^2}dA(z)
\right\}\]\[\times\frac{|k_w(x)|^2}{1-|w|^2}dA(x)^{\frac{1}{2+\epsilon}}\left\{(P_0||u||^{\delta}_{{\mathbb{C}}^n})(w)\right\}^{\frac{1}{\delta}}\left\{(P_0||v||_{\mathbb{C}^n}^{\delta})(w)\right\}^{\frac{1}{\delta}}\]
\[
\leq
\int_\mathbb{D}\left\{\int_{\mathbb{D}}(C||((G^*G(x))^{\frac{1}{2}}F^*F(z)(G^*G(x))^{\frac{1}{2}})||_{\mathbb{C}^nop}
)^{\frac{2+\epsilon}{2}}\frac{|k_w(z)|^2}{1-|w|^2}dA(z)
\right\}\]\[\times\frac{|k_w(x)|^2}{1-|w|^2}dA(x)^{\frac{1}{2+\epsilon}}\left\{(P_0||u||^{\delta}_{{\mathbb{C}}^n})(w)\right\}^{\frac{1}{\delta}}\left\{(P_0||v||_{\mathbb{C}^n}^{\delta})(w)\right\}^{\frac{1}{\delta}}
\]

\[\leq
\int_\mathbb{D}\left\{\int_{\mathbb{D}}C||(G^*G(x))^{\frac{2+\epsilon}{4}}(F^*F(z))^{\frac{2+\epsilon}{2}}(G^*G(x))^{\frac{2+\epsilon}{4}}||_{\mathbb{C}^nop}
\frac{|k_w(z)|^2}{1-|w|^2}dA(z)\right\}\]\[\times\frac{|k_w(x)|^2}{1-|w|^2}dA(x)^{\frac{1}{2+\epsilon}}\left\{(P_0||u||^{\delta}_{{\mathbb{C}}^n})(w)\right\}^{\frac{1}{\delta}}\left\{(P_0||v||_{\mathbb{C}^n}^{\delta})(w)\right\}^{\frac{1}{\delta}}\]

\[\leq
\left(Ctr\left(B\left((F^*F)^{\frac{2+\epsilon}{2}}\right)(w)B\left((G^*G)
^{\frac{2+\epsilon}{2}}\right)(w)\right)^{\frac{1}{2+\epsilon}}\right)\frac{1}{(1-|w|^2)^2}
\left\{(P_0||u||^{\delta}_{{\mathbb{C}}^n})(w)\right\}^{\frac{1}{\delta}}\]\[\times\left\{(P_0||v||_{\mathbb{C}^n}^{\delta})(w)\right\}^{\frac{1}{\delta}}\]

where $B$ is the Berezin transform and $C$ is a constant that is
possibly different from line to line.
\end{proof}

Now let us use the estimates from theorems \ref{1st} and \ref{2nd}
in the inner product formula. Taking our inner product formula
\[ \left\langle T_{F^*}(u),T_{G^*}(v)\right\rangle _{L_a^2(\mathbb{C}^n)} =
\int_\mathbb{D} \left\langle T_{F^*}(u),T_{G^*}(v)\right\rangle
_{\mathbb{C}^n} dA(z) \]\[= 3\int_\mathbb{D}(1-|z|^2)^2\left\langle
T_{F^*}(u),T_{G^*}(v)\right\rangle _{\mathbb{C}^n}dA(z) +
\frac{1}{2}\int_\mathbb{D}(1-|z|^2)^2\left\langle
T_{F^*}'(u),T_{G^*}'(v)\right\rangle _{\mathbb{C}^n}dA(z) +\]\[
\frac{1}{3}\int_\mathbb{D}(1-|z|^2)^3\left\langle
T_{F^*}'(u),T_{G^*}'(v)\right\rangle _{\mathbb{C}^n}dA(z),\]

let's take the term
$\frac{1}{2}\int_\mathbb{D}(1-|z|^2)^2\left\langle
T_{F^*}'(u),T_{G^*}'(v)\right\rangle _{\mathbb{C}^n}dA(z)$ and
estimate its modulus;
\[|\frac{1}{2}\int_\mathbb{D}(1-|z|^2)^2\left\langle T_{F^*}'(u),T_{G^*}'(v)\right\rangle _{\mathbb{C}^n}dA(z)|
\]

\[\leq\frac{1}{2}\int_\mathbb{D}|\left(Ctr\left(B\left((F^*F)^{\frac{2+\epsilon}{2}}\right)(w)B\left((G^*G)
^{\frac{2+\epsilon}{2}}\right)(w)\right)\right)^{\frac{1}{2+\epsilon}})\frac{(1-|w|^2)^2}{(1-|w|^2)^2}\times\]\[
\left\{P_0||u||^{\delta}_{{\mathbb{C}}^n}(w)\right\}^{\frac{1}{\delta}}\left\{P_0||v||_{\mathbb{C}^n}^{\delta}(w)\right\}^{\frac{1}{\delta}}dA(w)|\]

\[\leq \frac{1}{2}\sup_{w\in\mathbb{D}}\left(Ctr\left(B\left((F^*F)^{\frac{2+\epsilon}{2}}\right)(w)B\left((G^*G)
^{\frac{2+\epsilon}{2}}\right)(w)\right)\right)^{\frac{1}{2+\epsilon}})\]\[\times
\int_{\mathbb{D}}\left\{P_0||u||^{\delta}_{{\mathbb{C}}^n}(w)\right\}^{\frac{1}{\delta}}\left\{P_0||v||_{\mathbb{C}^n}^{\delta}(w)\right\}^{\frac{1}{\delta}}dA(w)\]

By Cauchy-Schwarz, this expression will be less than or equal to

\[ \frac{1}{2}\sup_{w\in\mathbb{D}}\left(Ctr\left(B\left((F^*F)^{\frac{2+\epsilon}{2}}\right)(w)B\left((G^*G)
^{\frac{2+\epsilon}{2}}\right)(w)\right)\right)^{\frac{1}{2+\epsilon}}
\]\[\times\left\{\int_{\mathbb{D}}\left\{P_0||u||^{\delta}_{{\mathbb{C}}^n}(w)\right\}^{\frac{2}{\delta}}dA(w)\right\}^{\frac{1}{2}}\left\{\int_\mathbb{D}\left\{P_0||v||_{\mathbb{C}^n}^{\delta}(w)\right\}^{\frac{2}{\delta}}dA(w)\right\}^{\frac{1}{2}}\]
now as the operator $P_0$ is $L^p$ bounded for $p > 1$,
\cite{bergman1}, this expression will be less than or equal to

\[\frac{1}{2}\sup_{w\in\mathbb{D}}\left(Ctr\left(B\left((F^*F)^{\frac{2+\epsilon}{2}}\right)(w)B\left((G^*G)
^{\frac{2+\epsilon}{2}}\right)(w)\right)\right)^{\frac{1}{2+\epsilon}})
||u||_{L^2(\mathbb{C}^n)}||v||_{L^2(\mathbb{C}^n)}.\] Estimating the
term \[ \frac{1}{3}\int_\mathbb{D}(1-|z|^2)^3\left\langle
T_{F^*}'(u),T_{G^*}'(v)\right\rangle _{\mathbb{C}^n}dA(z)\] from the
inner product formula is similar.

Finally let us estimate $3\int_\mathbb{D}(1-|z|^2)^2\left\langle
T_{F^*}(u),T_{G^*}(v)\right\rangle _{\mathbb{C}^n}dA(z),$ We can see
from \ref{1st} that
\[|\int_\mathbb{D}(1-|z|^2)^2\left\langle T_{F^*}(u),T_{G^*}(v)\right\rangle _{\mathbb{C}^n}dA(z)|
 \]\[=|\int_\mathbb{D}\frac{(1-|w|^2)^2}{(1-|w|^2)^2}
\int_{\mathbb{D}}\left\langle Gk_w \otimes Fk_w u , v\right\rangle
_{\mathbb{C}^n}dA(z)dA(w)| \]

\[\leq|\int_\mathbb{D}
||Gk_w \otimes Fk_w||_{op}dA(w) ||u||_{L_a^2{\mathbb{C}^n}}
||v||_{L_a^2{\mathbb{C}^n}}|\]
\[\leq\sup_{w\in\mathbb{D}} (tr B(G^*G)(w)B(F^*F)(w))^{\frac{1}{2}} ||u||_{L_a^2{\mathbb{C}^n}}
||v||_{L_a^2{\mathbb{C}^n}}.\]

Now we just use H\"{o}lder's inequality to get an expression similar
to the one in the previous estimate.
\[tr (B(G^*G)(w)B(F^*F)(w))  \]\[=tr (\int_\mathbb{D} G(x)^*G(x)
|k_w(x)|^2dA(x)\int_\mathbb{D} F(z)^*F(z) |k_w(z)|^2dA(z))\]

\[=\int_\mathbb{D} \int_\mathbb{D} tr (G(x)^*G(x)
|k_w(x)|^2F(z)^*F(z) |k_w(z)|^2)dA(x)dA(z)  \]

\[=\int_\mathbb{D} \int_\mathbb{D} tr \left\{G(x)^*G(x)
F(z)^*F(z) \right\}|k_w(z)|^2|k_w(x)|^2dA(x)dA(z)\]

\[\leq\left\{\int_\mathbb{D} \int_\mathbb{D} (tr \left\{G(x)^*G(x)
F(z)^*F(z)
\right\})^{\frac{2+\epsilon}{2}}|k_w(z)|^2|k_w(x)|^2dA(x)dA(z)\right\}^{\frac{2}{2+\epsilon}}\]
by H\"{o}lder.

This is then less than or equal to

\[\left\{\int_\mathbb{D} \int_\mathbb{D} (Ctr \left\{(G(x)^*G(x))^{\frac{2+\epsilon}{2}}
(F(z)^*F(z))^{\frac{2+\epsilon}{2}}
\right\})|k_w(z)|^2|k_w(x)|^2dA(x)dA(z)\right\}^{\frac{2}{2+\epsilon}}\]
by Theorem IX.2.10 on page 258 of \cite{Bhatia} and similar steps as
before. This final expression is then equal to
\[\left\{tr(\int_\mathbb{D} \int_\mathbb{D}
((G(x)^*G(x))^{\frac{2+\epsilon}{2}}
(F(z)^*F(z))^{\frac{2+\epsilon}{2}}
)|k_w(z)|^2|k_w(x)|^2dA(x)dA(z))\right\}^{\frac{2}{2+\epsilon}}\]

\[=\left\{tr(B\left\{(G(x)^*G(x))^{\frac{2+\epsilon}{2}}\right\}
(w)B\left\{(F(z)^*F(z))^{\frac{2+\epsilon}{2}}\right\}
(w))\right\}^{\frac{2}{2+\epsilon}}.\]

Note that here we can use the same reasoning to see that if
\[(\int_\mathbb{D}\left\{\int_{\mathbb{D}}(Ctr(G(z)F(x)^*F(x)G(z)^*)
)^{\frac{2+\epsilon}{2}}|k_w(z)|^2dA(z)
\right\}|k_w(x)|^2dA(x)^{\frac{1}{2+\epsilon}})\] is uniformly
bounded for some $\epsilon > 0$ then our Toeplitz product
$T_FT_{G^*}$ will be bounded. This condition is seemingly stronger
and less aesthetic than the other one but it will be used later on
when dealing with Toeplitz products that are also invertible. Note
that these last inequalities show that the sufficient condition is
stronger than the necessary condition.

\subsection{A Necessary Condition}

\begin{proof}[Proof of Theorem \ref{trace}.]In \cite{park} (see also
\cite{products} for a different approach) Park shows that for
functions $f$ and $g$ in the scalar Bergman space $L_a^2$ the
operator $f \otimes g$ defined by $f \otimes g h = \left\langle
h,g\right\rangle f$ with $h \in L_a^2$ is equal to the following,
\[T_fT_{\overline{g}} - 2T_zT_fT_{\overline{g}}T_{\overline{z}} +
T_z^2T_fT_{\overline{g}}T_{\overline{z}}^2.\] Using this result in
the vector valued case, we can see that \[F \otimes G=
\begin{pmatrix}\sum f_{1i} \otimes g_{1i} & \sum f_{1i}\otimes
g_{2i} & \ldots\\ \sum f_{2i}\otimes g_{1i} &\ddots & \\ \vdots &
\end{pmatrix}= T_FT_{G^*} - 2T_zT_FT_{G^*}T_{\overline{z}} +
T_z^2T_FT_{G^*}T_{\overline{z}}^2.\]

Let us estimate the norm of the operator $(F\circ\phi_w) \otimes
(G\circ\phi_w)$, where $F\circ\phi_w$ is the matrix-valued function
\[
\begin{pmatrix}f_{11}\circ\phi_w & f_{12}\circ\phi_w &\ldots \\ f_{21}\circ\phi_w & \ddots\\
\vdots\end{pmatrix}.\]
  Noting that the operator
$(F\circ\phi_w) \otimes (G\circ\phi_w$) is of finite rank we can as
an equivalent norm take the square root of the trace of the operator
$(F\circ\phi_w \otimes G\circ\phi_w)(G\circ\phi_w \otimes
F\circ\phi_w)$ by Lemma \ref{traceop}.

Also by Lemma \ref{traceop} we can see that this will be equal to
\[ \sum_{q=1}^n  \sum_{m=1}^n(\sum_{r=1}^n\sum_{l=1}^n\left\langle f_{qr}\circ\phi_w,
f_{ql}\circ\phi_w\right\rangle _{L^2}\left\langle
g_{ml}\circ\phi_w,g_{mr}\circ\phi_w\right\rangle _{L^2})\]

\[= \sum_{q=1}^n  \sum_{m=1}^n(\sum_{r=1}^n\sum_{l=1}^n B(f_{qr}
\overline{f_{ql}})(w)B(g_{ml}\overline{g_{mr}})(w)),\]

which is equal to the trace of the matrix $B(F^*F)(w)B(G^*G)(w).$

Let $U_w$ be the unitary operator on our vector valued $L^2$ space
given by $U_w f = (f \circ \phi_w)k_w.$ It is well known that $T_{F
\circ \phi_w}U_w = U_wT_F.$ So $T_{F \circ \phi_w} = U_wT_FU_w^*,$
and thus
\begin{align*}
&\left\{tr\left(B(F^*F)(w)B(G^*G)(w)\right)\right\}^{\frac{1}{2}} =
C||F\circ\phi_w \otimes G\circ\phi_w||_{op}
\\&
= ||T_{F\circ \phi_w}T_{G^*\circ \phi_w} - 2T_zT_{F\circ
\phi_w}T_{G^*\circ \phi_w}T_{\overline{z}} + T_z^2T_{F\circ
\phi_w}T_{G^*\circ \phi_w}T_{\overline{z}}^2||_{op}
\\
&=||(U_wT_FU_w^*U_wT_{G^*}U_w^* -
2T_zU_wT_FU_w^*U_wT_{G^*}U_w^*T_{\overline{z}} +
T_z^2U_wT_FU_w^*U_wT_{G^*}U_w^*T_{\overline{z}}^2)||_{op}
\\
&=||(U_wT_FT_{G^*}U_w^* - 2T_zU_wT_FT_{G^*}U_w^*T_{\overline{z}} +
T_z^2U_wT_FT_{G^*}U_w^*T_{\overline{z}}^2)||_{op}
\\&=||(U_wT_FT_{G^*}U_w^* -
2U_wT_{\phi_w}T_FT_{G^*}T_{\overline{\phi}_w}U_w^* +
U_wT_{\phi_w}^2T_FT_{G^*}T_{\overline{\phi}_w}^2U_w^*)||_{op}.
\end{align*}
We can now use the triangle inequality on the operator\\
$U_w(T_FT_{G^*} - 2T_{\phi_w}T_FT_{G^*}T_{\overline{\phi}_w} +
T_{\phi_w}^2T_FT_{G^*}T_{\overline{\phi}_w}^2)U_w^*$ as in
\cite{products} to get our result, using that $||T_{\phi_w}|| \leq
1$.
\end{proof}

In the following we will be working with square matrices $F$ and $G$
with entries from the scalar valued Bergman space
$L_a^2(\mathbb{D})$. Where it is not explicitly stated otherwise,
this will be the case. When we refer to a matrix being less than
another matrix, $F < G$, we mean in the sense of L\"{o}wner partial
ordering of matrices. See \cite{Bhatia}, \cite{matrix1} and
\cite{matrix2} for details on this.

\section{Bounded and Invertible Toeplitz Products}
\subsection{A reverse H\"{o}lder inequality} We will now develop
some of the theory needed to show a reverse H\"{o}lder inequality
used to characterize the matrices of analytic functions, $F$ and
$G$, such that the Toeplitz product $T_FT_{G^*}$ is bounded and
invertible on the vector valued Bergman space. Compare this next
lemma with Lemma 4.6 in \cite{products2}.
\begin{lem}{\label{berezin}}
If $F(z)$ is invertible for all $z \in \mathbb{D},$ then
\[ (F^*(w)F(w))
\leq  B(F^*F)(w),\]
\[ (F^*(z)F(z))
\leq \eta_s B(F^*F)(w)\] and
\[ (F^{-1}(w)F^{*-1}(w))
\leq  B(F^{-1}F^{*-1})(w)\]
\[ (F^{-1}(z)F^{-1*}(z))
\leq \eta_s B(F^{-1}F^{*-1})(w),\]

 when $z \in D(w,s)$ the pseudohyperbolic disk with radius $0 < s < 1$
and centre $w$ and $\eta_s$ is a constant dependent only on $s$.

\end{lem}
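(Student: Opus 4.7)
The plan is to reduce each of the four matrix inequalities to a scalar sub-mean-value comparison by testing against vectors $v\in\mathbb{C}^n$. Recall that $A\le B$ in the L\"owner order iff $v^*Av\le v^*Bv$ for every $v$, and that the Berezin transform commutes with sandwiching by $v$: $v^*B(M)(w)v=B(v^*Mv)(w)$, because the integral commutes with the matrix-vector action. Hence it suffices, for each fixed $v$, to exhibit a non-negative subharmonic function $u_v$ on $\mathbb{D}$ whose value at the base point is the scalar left-hand side and whose integral against $|k_w|^2\,dA$ is the scalar right-hand side.

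For the first inequality take $h_v(z):=F(z)v$; each component is analytic since $F$ has analytic entries, and $\|h_v(z)\|^2_{\mathbb{C}^n}=v^*F(z)^*F(z)v$ is a sum of $|\cdot|^2$ of analytic scalars, hence subharmonic. The pointwise invertibility hypothesis on $F$, together with the adjugate formula $F^{-1}(z)=\det(F(z))^{-1}\operatorname{adj}(F(z))$, shows that $F^{-1}$ also has analytic entries, so $F^{*-1}(z)=(F(z)^{-1})^*$ has anti-analytic entries. Setting $k_v(z):=F^{*-1}(z)v$ we get $\|k_v(z)\|^2=v^*F^{-1}(z)F^{*-1}(z)v$, again subharmonic (the square modulus of an anti-analytic function is subharmonic). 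Inequalities (1) and (3) now follow from the classical Bergman sub-mean-value: for any non-negative subharmonic $u$, $u\circ\phi_w$ is subharmonic on $\mathbb{D}$, so $u(w)=(u\circ\phi_w)(0)\le\int_{\mathbb{D}}(u\circ\phi_w)\,dA=B(u)(w)$.

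For inequalities (2) and (4) one needs the off-center version $u(z)\le\eta_s B(u)(w)$ whenever $z\in D(w,s)$ and $u\ge 0$ is subharmonic. This is a standard Bergman space estimate: on a fixed pseudohyperbolic neighbourhood of $w$ the weight $|k_w(\zeta)|^2=(1-|w|^2)^2/|1-\overline w\zeta|^4$ is bounded below by a constant times $(1-|w|^2)^{-2}$ via the comparison $|1-\overline w\zeta|\le C_s(1-|w|^2)$, while a sufficiently small pseudohyperbolic disk $D(z,r(s))$ around $z\in D(w,s)$ sits inside this neighbourhood and has Euclidean area bounded below by a constant times $(1-|w|^2)^2$. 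The classical sub-mean-value inequality for $u$ over $D(z,r(s))$, combined with these two bounds, gives $u(z)\le\eta_s B(u)(w)$ with $\eta_s$ depending only on $s$. Applying this with $u=\|h_v\|^2$ and $u=\|k_v\|^2$ completes (2) and (4). The only genuine obstacle is this pseudohyperbolic comparison, but it uses only classical M\"obius-invariance estimates for the reproducing kernel and area element, so no new difficulty arises; everything else is just the translation between matrix-valued quantities and their scalar quadratic forms.
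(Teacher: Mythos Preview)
Your proof is correct and shares the paper's core strategy: test the matrix inequalities against vectors $v\in\mathbb{C}^n$ to reduce to scalar statements, then exploit that $\|F(z)v\|^2$ and $\|F^{*-1}(z)v\|^2$ are subharmonic. The paper phrases the centred inequalities via the reproducing kernel identity $F(u)\mathbf{e}=\int F(z)\overline{K_u(z)}\,dA(z)\,\mathbf{e}$ together with Cauchy--Schwarz, which is the same content as your sub-mean-value step.

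The one genuine difference is in the off-centre inequalities (2) and (4). The paper obtains these by a M\"obius substitution: replace $F$ by $F\circ\phi_w$, apply the centred estimate at $u\in D(0,s)$ to get $F^*(\phi_w(u))F(\phi_w(u))\le (1-s^2)^{-2}\int (F\circ\phi_w)^*(F\circ\phi_w)\,dA = (1-s^2)^{-2}B(F^*F)(w)$, and read off the explicit constant $\eta_s=(1-s^2)^{-2}$. Your route instead bounds $|k_w|^2$ from below on a pseudohyperbolic neighbourhood and couples this with the Euclidean area of a small pseudohyperbolic disk about $z$; this is also valid but less direct and does not give the explicit constant. The M\"obius argument is shorter because it folds the geometry into a single change of variables, whereas your approach unpacks the comparable-size estimates by hand.
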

\begin{proof}
Let $\textbf{e}$ be an arbitrary vector. Then for $F \in
L^2_a(\mathbb{C}^n)$,

\[\left\langle F(u)\textbf{e},F(u)\textbf{e}\right\rangle  = \left\langle \int F(z)\overline{K_u(z)}dA(z) \textbf{e},\int
F(z)\overline{K_u(z)}dA(z) \textbf{e}\right\rangle  \]

\[=||\int F(z)\overline{K_u(z)}dA(z)\textbf{e}||^2_{\mathbb{C}^n}
\]
\[\leq\int \left\langle F\textbf{e},F\textbf{e}\right\rangle  dA(z)||K_u||^2_{L^2} = \left\langle \int F^*(z)F(z)
dA(z) \textbf{e}, \textbf{e}\right\rangle ||K_u||^2_{L^2}.\]

 So if $u \in D(0,s)$ \[F^*(u)F(u) \leq
\int F^*(z)F(z)dA(z)||K_u||^2_2 \leq \int
F^*(x)F(x)dA(z)\frac{1}{(1-s^2)^2} .\] If $z \in D(w,s)$ then $z =
\phi_w(u)$ for some $u \in D(0,s)$ thus \[F^*(z)F(z) =
F^*(\phi_w(u))F(\phi_w(u)) \leq \int
F^*(\phi_w(x))F(\phi_w(x))dA(z)\frac{1}{(1-s^2)^2} \]\[=
B(F^*F)(w)\frac{1}{(1-s^2)^2}.\]

Now let us show that $F^{-1}(w)F^{*-1}(w) \leq B(F^{-1}F^{*-1})(w).$

\[\left\langle F^{*-1}(w)\textbf{e},F^{*-1}(w)\textbf{e}\right\rangle  =
\left\langle
F^{*-1}(\phi_w(0))\textbf{e},F^{*-1}(\phi_w(0))\textbf{e}\right\rangle
\]

\[=\left\langle \int F^{-1}(\phi_w(z))dA(z)\int
F^{*-1}(\phi_w(z))dA(z)\textbf{e},\textbf{e}\right\rangle \] and we
arrive at the conclusion that $F^{-1}(w)F^{*-1}(w) \leq
B(F^{-1}F^{*-1})(w)$ in a similar manner to before.

So for $z \in D(w,s)$ we know that $F^{-1}(w)F^{*-1}(w) \leq
B(F^{-1}F^{*-1})(w)$ and $F^*(z)F(z) \leq
B(F^*F)(w)\frac{1}{(1-s^2)^2}.$ The other inequalities follow from
applying the same procedure to $F^{-1}F^{*-1}$ instead of $F^*F.$

\end{proof}

\begin{lem}
If there exists $\eta$ such that $F(z)G(z)^*G(z)F(z)^* > \eta I$ for
all $z \in \mathbb{D}$ and $tr(B(G^*G)(w)B(F^*F)(w))$ is uniformly
bounded on $\mathbb{D}$, then
\[||B(F^{-1}(F^*)^{-1})(w))^{\frac{1}{2}}B(F^*F)(w)^{\frac{1}{2}}||\] is uniformly bounded on $\mathbb{D}$.
\end{lem}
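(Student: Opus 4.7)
The plan is to exploit two elementary facts: the Berezin transform preserves the matrix (Löwner) partial order because it averages matrix values against the positive kernel $|k_w|^2\,dA$, and for any two positive matrices $P,Q$ one has the operator--theoretic identity $\|P^{1/2}Q^{1/2}\|_{op}^2=\|Q^{1/2}PQ^{1/2}\|_{op}$, together with $\|R\|_{op}\le \operatorname{tr}(R)$ whenever $R\ge 0$ is a finite matrix. Combining these with the hypothesis $FG^*GF^*>\eta I$, everything will reduce to the trace quantity that is uniformly bounded by assumption.

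First I would observe that the pointwise inequality $F(z)G(z)^*G(z)F(z)^*\ge \eta I$ forces $F(z)$ to be invertible for every $z\in\mathbb{D}$. Conjugating this inequality by $F(z)^{-1}$ on the right and $F(z)^{*-1}$ on the left yields the pointwise matrix inequality
\[
G(z)^*G(z)\;\ge\;\eta\, F(z)^{-1}F(z)^{*-1}\qquad (z\in\mathbb{D}).
\]
Applying the Berezin transform to both sides, and using that $B$ is an integral with a scalar positive kernel against matrix-valued integrands, gives
\[
B(G^*G)(w)\;\ge\;\eta\, B\bigl(F^{-1}F^{*-1}\bigr)(w)\qquad (w\in\mathbb{D}).
\]

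Next set $A(w):=B(F^{-1}F^{*-1})(w)$, $C(w):=\eta^{-1}B(G^*G)(w)$ and $X(w):=B(F^*F)(w)$; these are positive $n\times n$ matrices with $A\le C$. Since conjugation $T\mapsto X^{1/2}TX^{1/2}$ preserves positivity and the operator norm is monotone on positive matrices, $A\le C$ implies $\|X^{1/2}AX^{1/2}\|_{op}\le\|X^{1/2}CX^{1/2}\|_{op}$. Using $\|P^{1/2}Q^{1/2}\|_{op}^2=\|Q^{1/2}PQ^{1/2}\|_{op}$, this is the same as
\[
\bigl\|A^{1/2}X^{1/2}\bigr\|_{op}^2\;\le\;\bigl\|C^{1/2}X^{1/2}\bigr\|_{op}^2
=\bigl\|X^{1/2}CX^{1/2}\bigr\|_{op}
\le \operatorname{tr}\!\bigl(X^{1/2}CX^{1/2}\bigr)=\operatorname{tr}(CX).
\]
Unpacking the substitutions,
\[
\bigl\|B(F^{-1}F^{*-1})(w)^{1/2}B(F^*F)(w)^{1/2}\bigr\|_{op}^{2}\;\le\;\eta^{-1}\operatorname{tr}\!\bigl(B(G^*G)(w)B(F^*F)(w)\bigr),
\]
and the right--hand side is uniformly bounded on $\mathbb{D}$ by hypothesis, finishing the proof.

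There is no genuine obstacle; the only point that deserves a moment of thought is the positivity--preserving property of $B$ for matrix--valued inputs, but this is immediate by pairing with a vector $\mathbf{e}$ and noting that $|k_w|^2\,dA$ is a positive measure. Everything else is a short chain of standard matrix inequalities, so the main work in presenting the proof is lining up the order--preservation, the identity $\|P^{1/2}Q^{1/2}\|_{op}^2=\|Q^{1/2}PQ^{1/2}\|_{op}$ and the bound $\|\cdot\|_{op}\le \operatorname{tr}(\cdot)$ in the right order.
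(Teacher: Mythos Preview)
Your proof is correct and follows essentially the same route as the paper: from $FG^*GF^*\ge\eta I$ deduce the pointwise inequality $G^*G\ge\eta F^{-1}F^{*-1}$, apply the Berezin transform (which preserves the L\"owner order), then conjugate by $B(F^*F)^{1/2}$ and use the standard relations between operator norm and trace for positive matrices. One tiny slip: to obtain $G^*G\ge\eta F^{-1}F^{*-1}$ you should multiply by $F^{-1}$ on the left and $F^{*-1}$ on the right (not the other way round), but your displayed inequality is the correct one, so this is only a wording issue.
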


\begin{proof}

Let us suppose that $F(w)G(w)^*G(w)F(w)^* > \eta I$ for all $w \in
\mathbb{D}$. Then $B(G^*G)(w) \geq G(w)^*G(w) \geq \eta
(F(w)^*F(w))^{-1}$. The key inequality here is \\$G(w)^*G(w) \geq
\eta (F(w)^*F(w))^{-1}$, as this implies that \\$B(G^*G)(w) \geq
\eta B((F^*F)^{-1})(w)$ and so

\begin{multline*} (B(F^*F)(w))^{\frac{1}{2}}B(G^*G)(w)(B(F^*F)(w))^{\frac{1}{2}}
\\
\geq\eta(B(F^*F)(w))^{\frac{1}{2}}B((F^*F)^{-1}(w))(B(F^*F)(w))^{\frac{1}{2}}.
\end{multline*}

  Thus as
$||(B(F^*F)(w))^{\frac{1}{2}}B(G^*G)(w)(B(F^*F)(w))^{\frac{1}{2}}||
< M$ for all $w,$

\begin{multline*} tr(B(F^{-1}(F^*)^{-1})(w)B(F^*F)(w)) \\ \leq\frac{1}{\eta}
C||(B(F^*F)(w))^{\frac{1}{2}}B(G^*G)(w)(B(F^*F)(w))^{\frac{1}{2}}||
< \eta C M
\end{multline*}

and so
\begin{multline*}
||B(F^{-1}(F^*)^{-1})(w))^{\frac{1}{2}}B(F^*F)(w)^{\frac{1}{2}}||^{2}
\\=||(B(F^{-1}(F^*)^{-1})(w))^{\frac{1}{2}}B(F^*F)(w)(B(F^{-1}(F^*)^{-1})(w))^{\frac{1}{2}}||
\\ \leq Ctr(B(F^{-1}(F^*)^{-1})(w)B(F^*F)(w)) \\ \leq \frac{1}{\eta}
C||(B(F^*F)(w))^{\frac{1}{2}}B(G^*G)(w)(B(F^*F)(w))^{\frac{1}{2}}||
\end{multline*}

where $C$ and $\eta$ are constants independent of $w$.

\end{proof}

\begin{defn}
A dyadic rectangle $Q_{j,k,l}$ is a subset of the unit disk of the
form
\[\left\{z = re^{i\theta} : (k-1)2^{-j} \leq r \leq k2^{-j},
(l-1)2^{1-j}\pi \leq \theta \leq l2^{1-j}\pi\right\},\] where
$j,k,l$ are non negative integers and $k,l \leq 2^j.$
\end{defn}

\begin{figure}[htp]
  % Requires \usepackage{graphicx}
  \includegraphics[width=130pt,totalheight=0.5\textheight,viewport=50 260 350 860,]{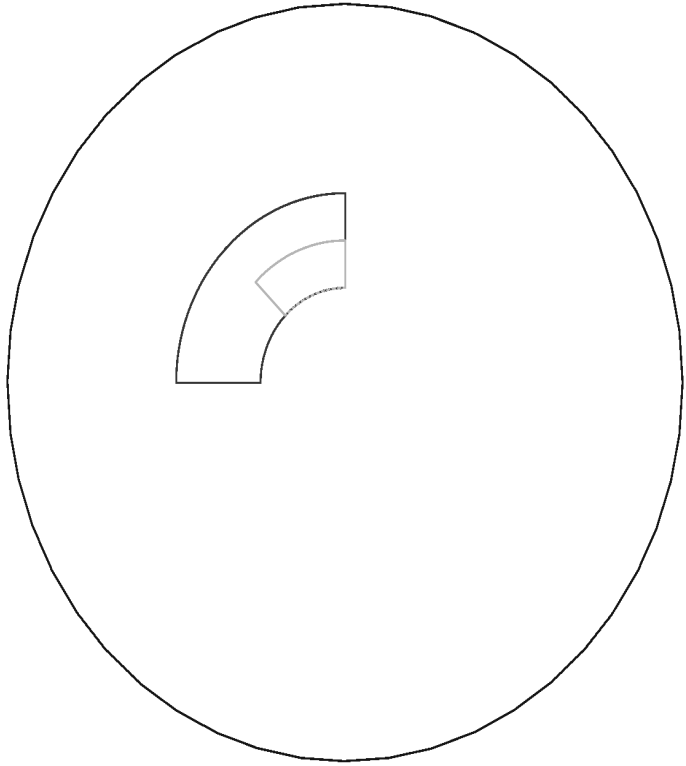}\\
  \caption{Two nested dyadic rectangles in the unit disk.}\label{major}
\end{figure}

\begin{lem}\label{contain}
There exists $0 < r < 1$ such that for all dyadic rectangles $Q$
with positive distance to the boundary  $Q \subset D(z_Q,r)$. Here,
$D$ is the pseudohyperbolic disk and $z_Q$ is the centre of the
dyadic rectangle $Q$.
\end{lem}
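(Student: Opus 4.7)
The plan is to estimate the pseudohyperbolic distance $\rho(z,z_Q)=|\phi_{z_Q}(z)|$ directly by way of the standard identity
\[
1 - \rho(z,z_Q)^2 \;=\; \frac{(1-|z_Q|^2)(1-|z|^2)}{|1-\overline{z_Q}z|^2}.
\]
It thus suffices to bound this ratio below by a positive constant, independently of the triple $(j,k,l)$ and of $z\in Q$; any such bound produces the desired universal $r<1$.

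The first step I would take is to record three elementary geometric facts about $Q=Q_{j,k,l}$ under the positive-distance-to-boundary hypothesis $k\leq 2^j-1$. First, taking $z_Q$ to be the polar centre gives $|z_Q|=(k-\tfrac12)2^{-j}$, so $1-|z_Q|^2\geq 1-|z_Q|\geq \tfrac{3}{2}\cdot 2^{-j}$. Second, the Euclidean diameter obeys $\mathrm{diam}(Q)\leq C_0\, 2^{-j}$ with $C_0=\sqrt{1+4\pi^2}$; this follows by expanding $|r_2 e^{i\theta_2}-r_1 e^{i\theta_1}|^2 = (r_2-r_1)^2+2r_1 r_2(1-\cos(2^{1-j}\pi))$ for opposite corners and using $1-\cos x\leq x^2/2$ with $r_1,r_2\leq 1$. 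Third, for every $z\in Q$ the quantities $1-|z|$ and $1-|z_Q|$ are comparable with absolute constants, since both lie in an interval of length $2^{-j}$ around $1-|z_Q|$, while $1-|z_Q|\geq \tfrac32 2^{-j}$, so their ratio stays in $[\tfrac{2}{3},2]$; consequently $1-|z|^2\geq c_0(1-|z_Q|^2)$ with universal $c_0>0$.

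Next, the first two facts combine to give $|z-z_Q|\leq C_0\, 2^{-j}\leq \tfrac{2C_0}{3}(1-|z_Q|^2)$. Applying the triangle inequality to $1-\overline{z_Q}z = (1-|z_Q|^2)+\overline{z_Q}(z_Q-z)$ then yields
\[
|1-\overline{z_Q}z| \;\leq\; (1-|z_Q|^2)+|z-z_Q| \;\leq\; C_1(1-|z_Q|^2)
\]
for a universal $C_1$. Feeding this together with the comparability of the third fact back into the identity for $1-\rho^2$ produces
\[
\frac{(1-|z_Q|^2)(1-|z|^2)}{|1-\overline{z_Q}z|^2}\;\geq\;\frac{c_0}{C_1^{\,2}},
\]
and therefore $\rho(z,z_Q)^2\leq 1-c_0/C_1^{\,2}=:r^2<1$ uniformly in $z\in Q$ and in $Q$, which is exactly the claim.

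I do not expect a serious obstacle: the whole argument is a direct calculation once the identity for $1-\rho^2$ is deployed. The only point requiring brief care is ensuring the geometric estimates are truly uniform in both the boundary regime ($k$ near $2^j$, where $1-|z_Q|^2\asymp 2^{-j}$) and the central regime ($k$ small, where $1-|z_Q|^2$ is of order one). In the central regime the ratio $|z-z_Q|/(1-|z_Q|^2)$ is in fact very small, so all the bounds are slack and no separate argument is needed.
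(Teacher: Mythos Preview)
Your argument is correct: the identity for $1-\rho^2$ together with the three elementary size estimates yields the uniform bound $\rho(z,z_Q)\leq r<1$ exactly as you claim, and the constants are manifestly independent of $(j,k,l)$.

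The paper does not actually prove this lemma; it simply cites Proposition~4.7 of Stroethoff--Zheng \cite{products2}. Your write-up therefore supplies a genuinely self-contained elementary proof where the paper defers to an external reference. The approach you take (bounding $1-\rho^2$ from below via the explicit formula) is the natural direct computation; the only minor cosmetic point is that your ratio bound $[\tfrac23,2]$ for $(1-|z|)/(1-|z_Q|)$ can be sharpened to $[\tfrac23,\tfrac43]$ since $1-|z_Q|$ is the midpoint of the admissible range for $1-|z|$, but this does not affect the argument.
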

\begin{proof}
This is just Proposition 4.7 in \cite{products2}.
\end{proof}

Compare this next lemma with Lemma 4.8 in \cite{products2}.
\begin{lem}\label{ap}
If

\[\sup_{w \in \mathbb{D}}||B(F^{-1}(F^*)^{-1})(w))^{\frac{1}{2}}B(F^*F)(w)^{\frac{1}{2}}||
< \infty,\]

 then

 \[\sup_{Q:dyadic}||
\left\{\frac{1}{|Q|}\int_Q (F^*F) dA(z)\right\}^{\frac{1}{2}}
\left\{\frac{1}{|Q|}\int_Q (F^{-1}F^{*-1})
dA(z)\right\}^{\frac{1}{2}}|| < \infty.\]
\end{lem}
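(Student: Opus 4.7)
The plan is to localize: for each dyadic rectangle $Q$ with positive distance to $\partial\mathbb{D}$, let $z_Q$ denote its centre and apply Lemma \ref{contain} to obtain a constant $0 < r < 1$, independent of $Q$, with $Q \subset D(z_Q, r)$. Lemma \ref{berezin} (used once for $F$ and once for $F^{-1}$) then gives the pointwise matrix dominations
\[F^*(z)F(z) \leq \eta_r\, B(F^*F)(z_Q), \qquad F^{-1}(z)F^{*-1}(z) \leq \eta_r\, B(F^{-1}F^{*-1})(z_Q) \qquad (z \in Q).\]
Since integration preserves the L\"owner order, averaging over $Q$ yields the two positive-matrix inequalities
\[A := \tfrac{1}{|Q|}\!\int_Q F^*F\, dA \leq \eta_r\, B(F^*F)(z_Q) =: A', \qquad B := \tfrac{1}{|Q|}\!\int_Q F^{-1}F^{*-1}\, dA \leq \eta_r\, B(F^{-1}F^{*-1})(z_Q) =: B'.\]

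The second ingredient is a matrix monotonicity claim: if $0 \leq A \leq A'$ and $0 \leq B \leq B'$ are positive definite, then $\|A^{1/2}B^{1/2}\| \leq \|A'^{1/2}B'^{1/2}\|$. This is proved by the two-step sandwich
\[\|A^{1/2}B^{1/2}\|^2 = \|A^{1/2}BA^{1/2}\| \leq \|A^{1/2}B'A^{1/2}\| = \|B'^{1/2}AB'^{1/2}\| \leq \|B'^{1/2}A'B'^{1/2}\| = \|A'^{1/2}B'^{1/2}\|^2,\]
where each inequality uses the implication $0 \leq X \leq Y \Rightarrow \|RXR^*\| \leq \|RYR^*\|$ applied with $R = A^{1/2}$ (resp.\ $R = B'^{1/2}$), and each equality uses the $C^*$-identity $\|T\|^2 = \|T^*T\| = \|TT^*\|$. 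Applying this with the $A, B, A', B'$ above, and using $\|P^{1/2}Q^{1/2}\| = \|Q^{1/2}P^{1/2}\|$ for positive $P, Q$ (take adjoints), one obtains
\[\bigl\|A^{1/2} B^{1/2}\bigr\| \leq \eta_r\, \bigl\|B(F^*F)(z_Q)^{1/2}\, B(F^{-1}F^{*-1})(z_Q)^{1/2}\bigr\|,\]
and taking the supremum over $Q$ gives the desired bound from the hypothesis.

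The main obstacle I expect is the matrix monotonicity step: since the operator-norm of a product is not directly monotone in each factor under the L\"owner order, one must route through the two sandwich inequalities above. The remaining technicality is that a few dyadic rectangles touch $\partial\mathbb{D}$, where Lemma \ref{contain} as stated does not apply; but such $Q$ can be subdivided into strictly interior dyadic children (or handled by an exhaustion), and the estimate above on interior rectangles yields the uniform bound over all dyadic $Q$.
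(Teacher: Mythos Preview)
Your treatment of interior dyadic rectangles is correct and in fact cleaner than the paper's: where the paper passes to traces (picking up dimensional constants) in order to exploit the monotonicity $A\le B\Rightarrow \mathrm{tr}(C^{1/2}AC^{1/2})\le \mathrm{tr}(C^{1/2}BC^{1/2})$ and the cyclicity of the trace, your two-step sandwich
\[
\|A^{1/2}BA^{1/2}\|\le\|A^{1/2}B'A^{1/2}\|=\|B'^{1/2}AB'^{1/2}\|\le\|B'^{1/2}A'B'^{1/2}\|
\]
does the job directly in the operator norm. No objection there.

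The gap is in your handling of dyadic rectangles $Q$ that touch $\partial\mathbb{D}$. First, the structural claim fails: a boundary-touching dyadic rectangle always has at least one dyadic child that still touches the boundary (the outer half in the radial direction), so no finite subdivision produces only interior pieces. Second, and more seriously, even if such a subdivision existed, the $A_2$-type quantity does \emph{not} pass from children to parent. Already in the scalar case, if $Q=Q_1\cup Q_2$ with $|Q_1|=|Q_2|$, and $w\equiv 1$ on $Q_1$, $w\equiv N$ on $Q_2$, then each $Q_i$ has $A_2$ constant $1$, while
\[
\Bigl(\tfrac{1}{|Q|}\!\int_Q w\Bigr)\Bigl(\tfrac{1}{|Q|}\!\int_Q w^{-1}\Bigr)=\frac{(1+N)^2}{4N}\xrightarrow[N\to\infty]{}\infty.
\]
So neither ``subdivide into interior children'' nor ``exhaust by interior pieces'' can give the uniform bound you need.

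The paper deals with this case by a separate estimate: for a boundary-touching $Q$ (not all of $\mathbb{D}$) one has $|z_Q|\ge 1/2$ and, by a kernel lower bound on $Q$ (Lemma~4.5 of \cite{products2}) together with $|Q|=8|z_Q|(1-|z_Q|)^2$, the matrix inequalities
\[
\frac{1}{|Q|}\int_Q F^*F\,dA \;\le\; C\,B(F^*F)(z_Q),
\qquad
\frac{1}{|Q|}\int_Q F^{-1}F^{*-1}\,dA \;\le\; C\,B(F^{-1}F^{*-1})(z_Q),
\]
after which your own sandwich argument (or the paper's trace version) finishes the job. The case $Q=\mathbb{D}$ is immediate since the averages are $B(F^*F)(0)$ and $B(F^{-1}F^{*-1})(0)$. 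You should replace your final paragraph with this argument.
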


\begin{proof}
If the dyadic rectangle $Q$ is the whole disk, then as
$\int_{\mathbb{D}} F^*F dA(z) = B(F^*F)(0)$ and $\int_{\mathbb{D}}
F^{-1}F^{*-1} dA(z) = B(F^{-1}F^{*-1})(0),$ we see that

\begin{multline*} ||\left(\int_{\mathbb{D}} F^*F
dA(z)\right)^{\frac{1}{2}}\left(\int_{\mathbb{D}} F^{-1}F^{*-1}
dA(z)\right)^{\frac{1}{2}}|| \\=
||B(F^{-1}(F^*)^{-1})(0)^{\frac{1}{2}}B(F^*F)(0)^{\frac{1}{2}}||
\end{multline*}

Now let us suppose that our dyadic rectangle $Q$ has a positive
distance from the boundary. By Lemma \ref{contain} our rectangle $Q$
will be strictly contained in a pseudohyperbolic disk $D(z_Q,R)$,
$z_Q$ being the centre of our dyadic rectangle and $R$ being the
same for each dyadic rectangle. Thus by Lemma \ref{berezin}
\[ (F^{-1}(z)F^{-1*}(z))
\leq \eta B(F^{-1}F^{*-1})(z_Q)\] and \[ (F^*(z)F(z)) \leq \eta
B(F^*F)(z_Q)\] for all $z$ in our pseudohyperbolic disk $D(z_Q,R)$.
Here the constant $\eta$ will only be dependent on $R$ which is the
same for all of these dyadic rectangles.

Thus using the fact that if $A,B$ and $C$ are positive matrices such
that $A \leq B$ then $C^{\frac{1}{2}}AC^{\frac{1}{2}} <
C^{\frac{1}{2}}BC^{\frac{1}{2}}$ and
$tr(C^{\frac{1}{2}}AC^{\frac{1}{2}}) <
tr(C^{\frac{1}{2}}BC^{\frac{1}{2}}),$ we can deduce the following
series of inequalities from our hypothesis: \[||
\left\{\frac{1}{|Q|}\int_Q (F^*F) dA(z)\right\}^{\frac{1}{2}}
\left\{\frac{1}{|Q|}\int_Q (F^{-1}F^{*-1})
dA(z)\right\}^{\frac{1}{2}}||^2\]
\begin{align*}
&= ||\left\{\frac{1}{|Q|}\int_Q (F^*F) dA(z)\right\}^{\frac{1}{2}}
\left\{\frac{1}{|Q|}\int_Q (F^{-1}F^{*-1})
dA(z)\right\}\left\{\frac{1}{|Q|}\int_Q (F^*F)
dA(z)\right\}^{\frac{1}{2}}||
\\
&\leq C tr\left(\left\{\frac{1}{|Q|}\int_Q (F^*F)
dA(z)\right\}^{\frac{1}{2}} \left\{\frac{1}{|Q|}\int_Q (F^*F)^{-1}
dA(z)\right\}\left\{\frac{1}{|Q|}\int_Q (F^*F)
dA(z)\right\}^{\frac{1}{2}}\right) \\
&\leq C tr\left(\left\{\frac{1}{|Q|}\int_Q (F^*F)
dA(z)\right\}^{\frac{1}{2}} \left\{\eta B(F^{-1}F^{*-1})(z_Q)
\right\}\left\{\frac{1}{|Q|}\int_Q (F^*F)
dA(z)\right\}^{\frac{1}{2}}\right)\\
&=C tr\left(\left\{\eta (B(F^{-1}F^{*-1})(z_Q))
\right\}^{\frac{1}{2}}\left\{\frac{1}{|Q|}\int_Q (F^*F)
dA(z)\right\} \left\{\eta (B(F^{-1}F^{*-1})(z_Q))
\right\}^{\frac{1}{2}}\right) \\
&\leq C tr\left(\left\{\eta (B(F^{-1}F^{*-1})(z_Q))
\right\}^{\frac{1}{2}}\left\{\eta B(F^*F)(z_Q)\right\} \left\{ (\eta
B(F^{-1}F^{*-1})(z_Q)) \right\}^{\frac{1}{2}}\right)\\
&\leq C ||\left(\left\{\eta (B(F^{-1}F^{*-1})(z_Q))
\right\}^{\frac{1}{2}}\left\{\eta B(F^*F)(z_Q)\right\} \left\{
\eta(B(F^{-1}F^{*-1})(z_Q)) \right\}^{\frac{1}{2}}\right)||\\
&\leq C^2 \eta^4 ||\left\{ (B(F^{-1}F^{*-1})(z_Q))
\right\}^{\frac{1}{2}}\left\{B(F^*F)(z_Q)\right\}^{\frac{1}{2}}||^{2}
< M. \end{align*}
 Note that $C$ is a constant that possibly changes
from line to line and is dependent on the dimension of
$\mathbb{C}^n$ only. $M$ will be dependent only on the uniform bound
of $B(F^{-1}F^{*-1})(w) B(F^*F)(w)$, the dimension we are working in
and the constant $R$ which is the same for each dyadic rectangle not
touching the boundary.
\\
What happens when we have a dyadic rectangle that touches the
boundary but is not the whole disk? We can see that the centre of
the rectangle $z_Q$ is at a distance of at least $1/2$ from the
centre, i.e. $|z_Q| \geq 1/2$. Then
\[B(F^*F)(z_Q) = \int_{\mathbb{D}}F^*(z)F(z)|k_{z_Q}|^2(z)dA(z)
\]\[\geq \int_Q F^*(z)F(z)|k_{z_Q}(z)|^2dA(z) \geq
\frac{c}{(1-|z_Q|)^2}\int_Q F^*(z)F(z)dA(z)\] by Lemma 4.5 in
\cite{products2}.

We can also see in this case that $|Q| = 8|z_Q|(1-|z_Q|)^2$ and so

\[B(F^*F)(z_Q) \geq \frac{4c}{|Q|}\int_Q F^*(z)F(z)dA(z).\]

We can do the same for $F^{-1}F^{*-1}$ to get that

\[B(F^{-1}F^{*-1})(z_Q) \geq \frac{4c}{|Q|}\int_Q F^{-1}(z)F^{*-1}(z)dA(z).\]

We can then combine these and take the trace to see that

\[tr\left( \left\{\frac{1}{|Q|}\int_Q F^*F(z)dA(z)\right\}^{\frac{1}{2}} \frac{1}{|Q|}\int_Q (F^*F)^{-1}(z)dA(z)\left\{\frac{1}{|Q|}\int_Q
F^*F(z)dA(z)\right\}^{\frac{1}{2}}\right) \]

\[\leq 4c tr\left(\left\{B(F^{-1}F^{*-1})(z_Q)\right\}^{\frac{1}{2}}\frac{1}{|Q|}\int_Q
F^*(z)F(z)dA(z)\left\{B(F^{-1}F^{*-1})(z_Q)\right\}^{\frac{1}{2}}\right)
\]

\[\leq 16c^2
tr\left(\left\{B(F^{-1}F^{*-1})(z_Q)\right\}^{\frac{1}{2}}B(F^*F)(z_Q)\left\{B(F^{-1}F^{*-1})(z_Q)\right\}^{\frac{1}{2}}\right)\]

\[\leq C
||\left\{B(F^{-1}F^{*-1})(z_Q)\right\}^{\frac{1}{2}}\left\{B(F^*F)(z_Q)\right\}^{\frac{1}{2}})||^{\frac{1}{2}}
< M^{\prime},\]

where $M^{\prime}$ is independent of $Q$.

\end{proof}

If $|| \left\{\frac{1}{|Q|}\int_Q (F^*F) dA(z)\right\}^{\frac{1}{2}}
\left\{\frac{1}{|Q|}\int_Q (F^{-1}F^{*-1})
dA(z)\right\}^{\frac{1}{2}}|| < M$ for all dyadic rectangles $Q$ and
some constant $M$, we will say that $F^*F$ has the matrix $A_2$
condition. See \cite{wavelet} for a similar notion of matrix
weights. We will now find a characterization of such functions $F$
in terms of the boundedness of certain averaging operators on the
function space $L^2(F^*F).$

\begin{thm}
If for $F \in M_{n \times n} (L^2_a)$ the matrix $F^*F$ has the
$A_2,$ condition then the averaging operators, $f \mapsto
\chi_Q\frac{1}{|Q|}\int_Q f(z) dA(z)$, are uniformly bounded on a
dense subset $L^2(\mathbb{C}^n) \cap L^2(F^*F)$, $Q$ varying over
all dyadic rectangles.

\end{thm}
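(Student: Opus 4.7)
The plan is to write $W = F^*F$ and unpack both sides of the desired inequality directly. The $A_2$ hypothesis says $\|T_Q\| \le M$ uniformly in $Q$, where $T_Q := \langle W\rangle_Q^{1/2}\langle W^{-1}\rangle_Q^{1/2}$ and $\langle W\rangle_Q := |Q|^{-1}\int_Q W\,dA$; the weighted norm is $\|f\|_{L^2(W)}^2 = \int \langle Wf,f\rangle_{\mathbb{C}^n}\,dA$. Working on the dense subset of bounded $\mathbb{C}^n$-valued functions (which guarantees that the average $\bar f_Q := |Q|^{-1}\int_Q f\,dA$ is defined), I would first observe that $A_Q f$ equals the constant vector $\bar f_Q$ on $Q$ and vanishes off $Q$, giving the identity
\[
\|A_Q f\|_{L^2(W)}^2 \;=\; |Q|\,\langle \langle W\rangle_Q \bar f_Q,\bar f_Q\rangle.
\]

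The key analytic step is a matrix Cauchy--Schwarz applied to the factorisation $\bar f_Q = |Q|^{-1}\int_Q W^{-1/2}(z)\,W^{1/2}(z)f(z)\,dA(z)$. For an arbitrary vector $u\in\mathbb{C}^n$, pair with $u$, rewrite the integrand as $\langle W^{-1/2}(z)u,W^{1/2}(z)f(z)\rangle_{\mathbb{C}^n}$, and apply scalar Cauchy--Schwarz to get
\[
|\langle u,\bar f_Q\rangle|^2 \;\le\; \frac{1}{|Q|}\,\langle \langle W^{-1}\rangle_Q u,u\rangle \,\int_Q \langle Wf,f\rangle\,dA.
\]

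To finish, specialise to $u = \langle W\rangle_Q \bar f_Q$, so that the left side becomes $\langle \langle W\rangle_Q \bar f_Q,\bar f_Q\rangle^2$. Setting $w = \langle W\rangle_Q^{1/2}\bar f_Q$, the factor $\langle \langle W^{-1}\rangle_Q u,u\rangle$ rewrites as $\langle \langle W\rangle_Q^{1/2}\langle W^{-1}\rangle_Q \langle W\rangle_Q^{1/2}w,w\rangle = \langle T_Q T_Q^* w,w\rangle \le M^2\|w\|^2 = M^2 \langle \langle W\rangle_Q \bar f_Q,\bar f_Q\rangle$, using the $A_2$ bound $\|T_Q\|\le M$. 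Dividing by $\langle \langle W\rangle_Q \bar f_Q,\bar f_Q\rangle$ (the case where it vanishes is trivial) and multiplying by $|Q|$ produces the uniform estimate
\[
\|A_Q f\|_{L^2(W)}^2 \;\le\; M^2\int_Q \langle Wf,f\rangle\,dA \;\le\; M^2\|f\|_{L^2(W)}^2.
\]
The main (mild) obstacle is carrying out the matrix Cauchy--Schwarz cleanly and verifying that the one-sided operator norm bound on $T_Q$ really controls the symmetric quantity $\langle W\rangle_Q^{1/2}\langle W^{-1}\rangle_Q\langle W\rangle_Q^{1/2} = T_Q T_Q^*$ as an operator inequality; aside from that and the density provision ensuring averages make sense, the argument is purely bookkeeping.
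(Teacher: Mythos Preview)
Your argument is correct. The matrix Cauchy--Schwarz step and the identification $\langle W\rangle_Q^{1/2}\langle W^{-1}\rangle_Q\langle W\rangle_Q^{1/2}=T_QT_Q^*$ are both fine, and your concern about the one-sided bound is unfounded since $\|T_QT_Q^*\|=\|T_Q\|^2$ automatically.

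Your route is genuinely different from the paper's. The paper treats the averaging operator as the orthogonal projection $P_Q$ of $L^2(\mathbb{C}^n)$ onto the finite-dimensional subspace $R=\{\chi_Q|Q|^{-1/2}\mathbf{e}:\mathbf{e}\in\mathbb{C}^n\}$ and computes its weighted norm via the distance formula
\[
\|P_Q\|_{L^2(W)}=\sup_{\mathbf{e}\neq 0}\frac{\|\chi_Q|Q|^{-1/2}\mathbf{e}\|_{L^2(W)}}{\operatorname{dist}_{L^2(W)}(\chi_Q|Q|^{-1/2}\mathbf{e},S')},
\]
then evaluates the denominator by a duality argument identifying $((F^*F)^{1/2}S')^{\perp}=(F^*F)^{-1/2}R$. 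This produces the \emph{exact} equality $\|P_Q\|_{L^2(W)}=\|\langle W\rangle_Q^{1/2}\langle W^{-1}\rangle_Q^{1/2}\|$, which is more than the theorem asks for and is what the paper exploits in the subsequent lemma to run the converse (bounded averaging operators imply scalar $A_2$ for diagonal entries). Your direct Cauchy--Schwarz argument is shorter and more elementary, but it only yields the inequality $\|P_Q\|_{L^2(W)}\le M$; if one later wants the reverse implication, the equality from the paper's approach is the natural tool.
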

The proof here and of the next theorem follow the reasoning in Lemma
2.1 in \cite{wavelet}.

\begin{proof}
Let $R$ be the subspace $\{\chi_Q\frac{1}{|Q|^{\frac{1}{2}}}
\textbf{e}: \textbf{e} \in \mathbb{C}^n\}$. We can see that the
orthogonal projection from $L^2(\mathbb{D},\mathbb{C}^n)$ onto $R$
is given by $P_Q:f \mapsto \chi_Q\frac{1}{|Q|}\int_Q  f(z) dA(z)$.
So we want to show that these projections are uniformly bounded with
respect to the $L^2(F^*F)$ norm. Clearly,

\[||P_Q||_{L^2(F^*F)} = \sup_{\{f \in L^2 \cap L^2(F^*F): ||f||_{L^2(F^*F)} \neq 0\}}\left\{\frac{||P_Q f
||_{L^2(F^*F)}}{||f||_{L^2(F^*F)}}\right\}
\]

If we let $S$ denote the orthogonal complement of $R$ in $L^2$, then
$f = f_1 + f_2$, where $f_1 \in R$ and $f_2 \in S^\prime = S \cap
L^2(F^*F)$. Thus the expression for the norm of the projection will
become
\begin{multline*}
\sup_{\{f_1+f_2 \in L^2 \cap L^2(F^*F): ||f||_{L^2(F^*F)} \neq
0\}}\left\{\frac{||f_1
||_{L^2(F^*F)}}{||f_1+f_2||_{L^2(F^*F)}}\right\}\\ = \sup_{\{
\textbf{e} \in \mathbb{C}^n: \textbf{e} \neq
0\}}\left\{\frac{||\chi_Q\frac{1}{|Q|^{\frac{1}{2}}}\textbf{e}
||_{L^2(F^*F)}}{\inf_{\{f_2 \in
S\}}||\chi_Q\frac{1}{|Q|^{\frac{1}{2}}}\textbf{e}+f_2||_{L^2(F^*F)}}\right\}
\\= \sup_{\{ \textbf{e} \in \mathbb{C}^n: \textbf{e} \neq
0\}}\left\{\frac{||\chi_Q\frac{1}{|Q|^{\frac{1}{2}}}\textbf{e}
||_{L^2(F^*F)}}{dist_{L^2(F^*F)}\left(\chi_Q\frac{1}{|Q|^{\frac{1}{2}}}\textbf{e},S^\prime\right)}\right\}.
\end{multline*}

So let us take a look at
$dist_{L^2(F^*F)}\left(\chi_Q\frac{1}{|Q|^{\frac{1}{2}}}\textbf{e},S^\prime\right)$.

\begin{multline*}
dist_{L^2(F^*F)}\left(\chi_Q\frac{1}{|Q|^{\frac{1}{2}}}\textbf{e},S^\prime\right)
=
dist_{L^2}\left((F^*F)^{\frac{1}{2}}\chi_Q\frac{1}{|Q|^{\frac{1}{2}}}\textbf{e},(F^*F)^{\frac{1}{2}}S^\prime\right)\\=\sup_{\left\{h
\in
\left((F^*F)^{\frac{1}{2}}S^\prime\right)^\bot:||h||=1\right\}}\left\langle(F^*F)^{\frac{1}{2}}\chi_Q\frac{1}{|Q|^{\frac{1}{2}}}\textbf{e},h\right\rangle,
\end{multline*}
$(F^*F)^{-1}$ exists as we have the $A_2$ condition. Note that
$\left((F^*F)^{\frac{1}{2}}S^\prime\right)^\bot =
\left((F^*F)^{-\frac{1}{2}}R\right).$ Then we can see that:

\[dist_{L^2(F^*F)}\left(\chi_Q\frac{1}{|Q|^{\frac{1}{2}}}\textbf{e},S^\prime\right)=\sup_{\left\{h \in
\left((F^*F)^{-\frac{1}{2}}R\right):||h||=1\right\}}\left\langle(F^*F)^{\frac{1}{2}}\chi_Q\frac{1}{|Q|^{\frac{1}{2}}}\textbf{e},h\right\rangle\]

\[=\sup_{\left\{\textbf{g} \in\mathbb{C}^n:||(F^*F)^{-\frac{1}{2}}\chi_Q\frac{1}{|Q|^{\frac{1}{2}}}\textbf{g}||\leq 1\right\}}\left\langle(F^*F)^{\frac{1}{2}}\chi_Q\frac{1}{|Q|^{\frac{1}{2}}}\textbf{e},(F^*F)^{-\frac{1}{2}}\chi_Q\frac{1}{|Q|^{\frac{1}{2}}}\textbf{g}\right\rangle\]

\[=\sup_{\left\{\textbf{g} \in\mathbb{C}^n:\frac{1}{|Q|}\int_Q\left\langle(F^*F)^{-1}\textbf{g},\textbf{g}\right\rangle\leq 1\right\}}\left\langle(F^*F)^{\frac{1}{2}}\chi_Q\frac{1}{|Q|^{\frac{1}{2}}}\textbf{e},(F^*F)^{-\frac{1}{2}}\chi_Q\frac{1}{|Q|^{\frac{1}{2}}}\textbf{g}\right\rangle\]
\begin{multline*}
=\sup_{\left\{\textbf{g}
\in\mathbb{C}^n:\frac{1}{|Q|}\int_{Q}\left\langle(F^*F)^{-1}\textbf{g},\textbf{g}\right\rangle\leq
1\right\}}\left\langle\chi_Q\frac{1}{|Q|^{\frac{1}{2}}}\textbf{e},\chi_Q\frac{1}{|Q|^{\frac{1}{2}}}\textbf{g}\right\rangle\\
=\sup_{\left\{\textbf{h} \in\mathbb{C}^n:||\textbf{h}||\leq
1\right\}}\left\langle\textbf{e},\left\{\int_{\mathbb{D}}F^{-1}F^{*-1}\chi_Q\frac{1}{|Q|}\right\}^{-\frac{1}{2}}\textbf{h}\right\rangle
\\
=||\left\{\frac{1}{|Q|}\int_{Q}F^{-1}F^{*-1}\right\}^{-\frac{1}{2}}\textbf{e}||
\end{multline*}

Let us now put this equivalent expression for the distance back into
our expression for the norm of the projection in $L^2(F^*F)$;
\begin{multline*}
||P_Q||_{L^2(F^*F) \rightarrow L^2(F^*F)} = \sup_{\{ \textbf{e} \in
\mathbb{C}^n: \textbf{e} \neq
0\}}\left\{\frac{||\chi_Q\frac{1}{|Q|^{\frac{1}{2}}}\textbf{e}
||_{L^2(F^*F)}}{dist_{L^2(F^*F)}\left(\chi_Q\frac{1}{|Q|^{\frac{1}{2}}}\textbf{e},S^\prime\right)}\right\}\\=
\sup_{\{ \textbf{e} \in \mathbb{C}^n: \textbf{e} \neq
0\}}\left\{\frac{||\chi_Q\frac{1}{|Q|^{\frac{1}{2}}}\textbf{e}
||_{L^2(F^*F)}}{||\left\{\frac{1}{|Q|}\int_{Q}F^{-1}F^{*-1}\right\}^{-\frac{1}{2}}\textbf{e}||}\right\}
\end{multline*}

\begin{multline*}= \sup_{\{ \textbf{e} \in \mathbb{C}^n: \textbf{e} \neq
0\}}\left\{\frac{||\left\{\frac{1}{|Q|}\int_Q
F^*F\right\}^{\frac{1}{2}}\textbf{e}
||}{||\left\{\frac{1}{|Q|}\int_{Q}F^{-1}F^{*-1}\right\}^{-\frac{1}{2}}\textbf{e}||}\right\}\\=
\sup_{\{ \textbf{e} \in \mathbb{C}^n: \textbf{e} \neq
0\}}\left\{\frac{||\left\{\frac{1}{|Q|}\int_Q
F^*F\right\}^{\frac{1}{2}}\textbf{e}
||}{||\left\{\frac{1}{|Q|}\int_{Q}F^{-1}F^{*-1}\right\}^{-\frac{1}{2}}\textbf{e}||}\right\}
\end{multline*}

\[=||\left\{\frac{1}{|Q|}\int_Q
F^*F\right\}^{\frac{1}{2}}\left\{\frac{1}{|Q|}\int_{Q}F^{-1}F^{*-1}\right\}^{\frac{1}{2}}
||.
\]
\end{proof}

\begin{lem}

If the averaging operators $g \mapsto \chi_Q\frac{1}{|Q|}\int_Q g(z)
dA(z)$ are uniformly bounded on $L^2(|f|^2)$ over $Q$ dyadic, then
$|f|^2$ has the scalar $A_2$ condition.

\end{lem}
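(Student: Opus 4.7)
The plan is to deduce the scalar $A_2$ condition for $w := |f|^2$ directly by testing the assumed uniform boundedness of the averaging operators $A_Q g := \chi_Q \frac{1}{|Q|}\int_Q g\, dA$ on $L^2(w)$ against a single carefully chosen function supported on $Q$. Writing out the inequality $\|A_Q g\|_{L^2(w)}^2 \le C\|g\|_{L^2(w)}^2$ for $g$ supported in $Q$ gives
\[
\Bigl(\int_Q w\,dA\Bigr)\Bigl|\frac{1}{|Q|}\int_Q g\,dA\Bigr|^2 \;\le\; C\int_Q |g|^2 w\,dA,
\]
and the whole point is that the left-hand factor $\int_Q w$ and the right-hand factor $|g|^2 w$ can be decoupled by choosing $g$ to be a power of $w$.

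The natural test function is $g = w^{-1}\chi_Q$. Then $|g|^2 w = w^{-1}\chi_Q$, and the $\frac{1}{|Q|}\int_Q g$ on the left equals $\frac{1}{|Q|}\int_Q w^{-1}$. Substituting and cancelling one factor of $\int_Q w^{-1}$ from each side yields
\[
\Bigl(\frac{1}{|Q|}\int_Q w\,dA\Bigr)\Bigl(\frac{1}{|Q|}\int_Q w^{-1}\,dA\Bigr) \;\le\; C,
\]
uniformly in $Q$, which is precisely the scalar $A_2$ condition for $w = |f|^2$.

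The only real obstacle is that $w^{-1}\chi_Q$ need not lie in $L^2(w)$ a priori; indeed $\int_Q w^{-1}\,dA$ could be infinite. To handle this I would use a standard truncation: replace $g$ by $g_\epsilon = (w+\epsilon)^{-1}\chi_Q$ (or equivalently, work with $\min(w^{-1},N)\chi_Q$). This test function is in $L^2(w)$, apply the uniform bound to get
\[
\Bigl(\frac{1}{|Q|}\int_Q w\,dA\Bigr)\Bigl(\frac{1}{|Q|}\int_Q (w+\epsilon)^{-1}\,dA\Bigr) \;\le\; C,
\]
and then send $\epsilon \to 0^+$ using monotone convergence. This also shows that $w^{-1}$ is automatically locally integrable on each $Q$, so the $A_2$ condition is meaningful and holds with the same constant $C$ that bounds the family of averaging operators.
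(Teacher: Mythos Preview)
Your argument is correct and is the standard direct route to this fact: test the averaging operator on $g_\epsilon=(w+\epsilon)^{-1}\chi_Q$, write out $\|A_Qg_\epsilon\|_{L^2(w)}^2\le C\|g_\epsilon\|_{L^2(w)}^2$, cancel one factor of $\int_Q(w+\epsilon)^{-1}$, and pass to the limit. This is cleaner than what the paper does.

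The paper instead recycles the abstract Hilbert-space machinery from the preceding (vector-valued) theorem. It identifies $A_Q$ with an orthogonal projection $P_Q:L^2\to \chi_Q|Q|^{-1/2}\mathbb{C}$, writes the operator norm of $P_Q$ on $L^2(|f|^2)$ as a supremum of quotients $\|\chi_Q|Q|^{-1/2}z\|_{L^2(|f|^2)}/\mathrm{dist}_{L^2(|f|^2)}(\chi_Q|Q|^{-1/2}z,S')$, and then computes the distance by duality against $(|f|^2)^{-1/2}R$, obtaining $\mathrm{dist}=\bigl[\tfrac{1}{|Q|}\int_Q|f|^{-2}\bigr]^{-1/2}$. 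The quotient then collapses to the $A_2$ expression. The $\epsilon$-regularisation appears in the paper too, at the stage of computing the distance. The payoff of the paper's route is structural parallelism with the matrix case just above (where the projection/distance formalism is genuinely needed to unwind the matrix weight); your test-function argument is shorter and more transparent in the scalar setting but would not generalise as readily to the matrix $A_2$ statement.
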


\begin{proof}
Again we can see that the averaging operator $g \mapsto
\chi_Q\frac{1}{|Q|}\int_Q g(z) dA(z)$ is the projection
$P:L^2\rightarrow \chi_Q\frac{1}{|Q|^{\frac{1}{2}}}\mathbb{C}.$ We
are working as before on the dense subset $L^2(\mathbb{C}) \cap
L^2(|f|^2).$ If we assume that $\frac{1}{|f|^2}$ is bounded then we
can as before show that
\begin{multline*}
dist_{L^2(|f|^2)}(\chi_Q\frac{1}{|Q|^{\frac{1}{2}}}z,S^{\prime}) =
\left|\left[\int_{\mathbb{D}}\frac{1}{|f|^2}\chi_Q\frac{1}{|Q|}\right]^{-\frac{1}{2}}z\right|=
\left|\left[\int_{\mathbb{D}}\frac{1}{|f|^2}\chi_Q\frac{1}{|Q|}\right]^{-\frac{1}{2}}\right|.
\end{multline*}
where $|z| = 1$.

So if we drop this assumption on $\frac{1}{|f|^2}$ but instead use
$\frac{1}{|f|^2+\epsilon}$ for $\epsilon > 0$, then we can see that

\[dist_{L^2(|f|^2)}(\chi_Q\frac{1}{|Q|^{\frac{1}{2}}}z,S^{\prime})=
\lim_{\epsilon \rightarrow 0}
dist_{L^2(|f|^2+\epsilon)}(\chi_Q\frac{1}{|Q|^{\frac{1}{2}}}z,S^{\prime})\]

\[=\lim_{\epsilon \rightarrow 0}\left|\left[\frac{1}{|Q|}\int_{Q}\frac{1}{|f|^2+\epsilon}\right]^{-\frac{1}{2}}\right|,\]
where $S^\prime$ is the intersection of the orthogonal complement of
$\chi_Q\frac{1}{|Q|^{\frac{1}{2}}}\mathbb{C}$ with $L^2(|f|^2)$ and
$|z| = 1$.

 As the norm of our bounded projection $P$ is
\[\sup_{z\in\mathbb{C}:z \neq
0, |z| =
1}\left\{\frac{||\chi_Q\frac{1}{|Q|^{\frac{1}{2}}}z||_{L^2(|f|^2)}}{dist_{L^2(|f|^2)}(\chi_Q\frac{1}{|Q|^{\frac{1}{2}}}z,S^{\prime})}\right\}=\sup_{z\in\mathbb{C}:z
\neq 0, |z| =
1}\left\{\frac{||\chi_Q\frac{1}{|Q|^{\frac{1}{2}}}||_{L^2(|f|^2)}}{dist_{L^2(|f|^2)}(\chi_Q\frac{1}{|Q|^{\frac{1}{2}}}z,S^{\prime})}\right\},\]

we know that
$dist_{L^2(|f|^2)}(\chi_Q\frac{1}{|Q|^{\frac{1}{2}}}z,S^{\prime})$
is nonzero for nonzero $z$ and hence \[\lim_{\epsilon \rightarrow
0}\left|\left[\frac{1}{|Q|}\int_{Q}\frac{1}{|f|^2+\epsilon}\right]\right|
< \infty\]

and so by the Monotone Convergence Theorem

\[ \left|\left[\frac{1}{|Q|}\int_{Q}\frac{1}{|f|^2}\right]\right|
< \infty\] and

\[dist_{L^2(|f|^2)}(\chi_Q\frac{1}{|Q|^{\frac{1}{2}}}z,S^{\prime}) =
\left|\left[\frac{1}{|Q|}\int_{Q}\frac{1}{|f|^2}\right]^{-\frac{1}{2}}\right|
\] where $|z| = 1$.

Thus \[||P||_{L^2(|f|^2)} = \sup_{z\in\mathbb{C}:z \neq 0, |z|
=1}\left\{\frac{||\chi_Q\frac{1}{|Q|^{\frac{1}{2}}}z||_{L^2(|f|^2)}}{dist_{L^2(|f|^2)}(\chi_Q\frac{1}{|Q|^{\frac{1}{2}}}z,S^{\prime})}\right\}\]
\[=\sup_{z\in\mathbb{C}:z \neq
0,|z| = 1}\left\{\frac{|\left[\frac{1}{|Q|}\int_Q |f|^2
\right]^{\frac{1}{2}}|}{\left|\left[\frac{1}{|Q|}\int_{Q}\frac{1}{|f|^2}\right]^{-\frac{1}{2}}\right|}\right\}
= \left\{\left|\left[\frac{1}{|Q|}\int_Q |f|^2
\right]^{\frac{1}{2}}\right|\left|\left[\frac{1}{|Q|}\int_{Q}\frac{1}{|f|^2}\right]^{\frac{1}{2}}\right|\right\}
\] which is uniformly bounded as required.

\end{proof}
Compare the next lemma with Lemma 3.6 in \cite{wavelet}.
\begin{lem}
If $F^*F$ has the $A_2$ condition, then $trace(F^*F)$ has the scalar
$A_2$ condition.
\end{lem}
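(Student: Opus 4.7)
The plan is to leverage the two lemmas immediately preceding this one and to avoid any direct manipulation of matrix $A_2$ averages. The preceding theorem says that the matrix $A_2$ condition on $F^*F$ implies the vector-valued averaging operators $P_Q\colon \mathbf{h}\mapsto \chi_Q\tfrac{1}{|Q|}\int_Q\mathbf{h}\,dA$ are uniformly bounded on the dense subset $L^2(\mathbb{C}^n)\cap L^2(F^*F)$, say with constant $M$. The preceding scalar lemma says, conversely, that uniform boundedness of the scalar averaging operators on $L^2(w)$ forces the nonnegative weight $w$ to be scalar $A_2$. So if I can show the scalar averaging operators are uniformly bounded on $L^2(\mathrm{tr}(F^*F))$, I am done.

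The bridge is to test the vector-valued bound on ``fixed direction'' functions. Fix scalar $g\in L^2(\mathbb{D})\cap L^2(\mathrm{tr}(F^*F))$ and a dyadic rectangle $Q$. For each standard basis vector $\mathbf{e}_i$ of $\mathbb{C}^n$ set $\mathbf{h}_i(z)=g(z)\mathbf{e}_i$; since $\int|g|^2(F^*F)_{ii}\,dA\le \int|g|^2\mathrm{tr}(F^*F)\,dA<\infty$, each $\mathbf{h}_i$ lies in $L^2(\mathbb{C}^n)\cap L^2(F^*F)$, and $P_Q\mathbf{h}_i=\chi_Q\langle g\rangle_Q\mathbf{e}_i$. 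The weighted norms collapse to scalar weighted norms,
\[\|\mathbf{h}_i\|_{L^2(F^*F)}^2=\int_{\mathbb{D}}|g|^2(F^*F)_{ii}\,dA,\qquad \|P_Q\mathbf{h}_i\|_{L^2(F^*F)}^2=|\langle g\rangle_Q|^2\int_Q (F^*F)_{ii}\,dA,\]
so the vector-averaging inequality applied to each $\mathbf{h}_i$ reads
\[ |\langle g\rangle_Q|^2\int_Q (F^*F)_{ii}\,dA \;\le\; M^2\int_{\mathbb{D}}|g|^2(F^*F)_{ii}\,dA.\]
Summing from $i=1$ to $n$ assembles the diagonal entries on both sides into $\mathrm{tr}(F^*F)$, yielding
\[ |\langle g\rangle_Q|^2\int_Q \mathrm{tr}(F^*F)\,dA \;\le\; M^2\int_{\mathbb{D}}|g|^2\,\mathrm{tr}(F^*F)\,dA,\]
which is exactly the uniform boundedness of the scalar averaging operators on the dense subset $L^2(\mathbb{D})\cap L^2(\mathrm{tr}(F^*F))$.

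Feeding this into the preceding scalar lemma with weight $|f|^2$ replaced by $\mathrm{tr}(F^*F)$ delivers the scalar $A_2$ condition. There is no real obstacle here: the whole proof is the bookkeeping observation that on fixed-direction vectors the vector-valued $L^2(F^*F)$ norm reduces to the scalar weighted norm with weight $(F^*F)_{ii}$, so summing over an orthonormal basis reconstitutes the trace. The only genuine check is the density/integrability bookkeeping, which is immediate from $\mathrm{tr}(F^*F)\ge (F^*F)_{ii}$.
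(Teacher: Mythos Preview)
Your proposal is correct and follows essentially the same approach as the paper: both test the uniformly bounded vector-valued averaging operators on fixed-direction functions $g\mathbf{e}_i$ to reduce to scalar weighted norms with weight $(F^*F)_{ii}$, and then invoke the preceding scalar lemma. The only cosmetic difference is the order of the final two steps: the paper concludes that each diagonal entry $(F^*F)_{ii}$ is scalar $A_2$ and then asserts that a sum of $A_2$ weights is $A_2$, whereas you sum the averaging inequalities over $i$ first and apply the scalar lemma once to $\mathrm{tr}(F^*F)$ directly---your route is marginally cleaner since it sidesteps the (easy but unproved in the paper) fact that finite sums of $A_2$ weights are $A_2$.
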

\begin{proof}

We will show that each element on the diagonal of $F^*F$ has the
scalar $A_2$ condition. We can then deduce that the sum of these
will also have the $A_2$ condition. Firstly we know that if $F^*F$
has the $A_2$ condition, then the operators $f \mapsto
\chi_Q\frac{1}{|Q|}\int_Q  f(z) dA(z)$ are uniformly bounded on
$L^2(F^*F),$ $f \in L^2(\mathbb{C}^n)$. So if we take $g \in
L^2(\mathbb{D})\cap L^2(\mathbb{D},\left\langle
F^*F(0,\ldots,1,\ldots,0),(0,\ldots,1,\ldots,0)\right\rangle )$
where $\left\langle
F^*F(0,\ldots,1,\ldots,0),(0,\ldots,1,\ldots,0)\right\rangle$ is the
scalar valued function\\ $z \mapsto \left\langle
F^*(z)F(z)(0,\ldots,1,\ldots,0),(0,\ldots,1,\ldots,0)\right\rangle.$
Then note that $g(0,\ldots,1,\ldots,0) \mapsto
\chi_Q\frac{1}{|Q|}\int_Q g(z)(0,\ldots,1,\ldots,0) dA(z)$ is
uniformly bounded between $L_a^2(\mathbb{D},\mathbb{C}^n)$, this
implies that $g \mapsto \chi_Q\frac{1}{|Q|}\int_Q g(z) dA(z)$ is
uniformly bounded with respect to the scalar measure $\left\langle
F^*F(0,\ldots,1,\ldots,0),(0,\ldots,1,\ldots,0)\right\rangle
_{\mathbb{C}^n}$, which will be whatever diagonal element of $F^*F$
we want. Thus by the previous lemma the trace of $F^*F$ will have
the scalar $A_2$ condition.

\end{proof}

Compare this next lemma with Lemma 4.9 in \cite{products2}, Lemma
2.5 in \cite{invproducts} and also 1.7 on page 196 of \cite{stein}.
\begin{lem}\label{fairshare} If a scalar valued function $|f|^2$ has the $A_2$ condition
and for some $0 < \delta < 1$ then for each dyadic rectangle $Q$ and
$E \subset Q$ such that $|E| \leq \delta |Q|$ we have that $\mu(E)
\leq \lambda\mu(Q)$ for some $0 < \lambda < 1$ where $d\mu = |f|^2
dA$ and $\lambda$ only depends on $\delta$ and the $A_2$ constant of
$|f|^2$.

\end{lem}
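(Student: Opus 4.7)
The plan is to reduce the statement to the classical Cauchy--Schwarz argument that upgrades the scalar $A_2$ condition to an $A_\infty$-type (``fair share'') property for the weight $|f|^2$. The key idea is to work with the complement $Q \setminus E$ rather than with $E$ itself: from $|E| \leq \delta |Q|$ one gets $|Q \setminus E| \geq (1-\delta)|Q|$, and it is on this set that Cauchy--Schwarz produces a clean lower bound for $\mu(Q \setminus E)$, from which the desired upper bound on $\mu(E)$ follows by complementation.

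Concretely, I would factor $1 = |f| \cdot |f|^{-1}$ on $Q \setminus E$ and apply Cauchy--Schwarz:
\[
|Q \setminus E|^2 \;\leq\; \left(\int_{Q \setminus E} |f|^2\, dA\right)\left(\int_{Q \setminus E} |f|^{-2}\, dA\right) \;\leq\; \mu(Q \setminus E) \int_Q |f|^{-2}\, dA.
\]
Letting $M$ denote the scalar $A_2$ constant of $|f|^2$, the hypothesis gives $\frac{1}{|Q|}\int_Q |f|^2\, dA \cdot \frac{1}{|Q|}\int_Q |f|^{-2}\, dA \leq M^2$, so $\int_Q |f|^{-2}\, dA \leq M^2 |Q|^2/\mu(Q)$. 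Plugging this in together with $|Q \setminus E| \geq (1-\delta)|Q|$ yields
\[
(1-\delta)^2 |Q|^2 \;\leq\; \mu(Q \setminus E)\cdot \frac{M^2 |Q|^2}{\mu(Q)},
\]
and hence $\mu(Q \setminus E) \geq \frac{(1-\delta)^2}{M^2}\,\mu(Q)$.

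Taking complements, $\mu(E) = \mu(Q) - \mu(Q \setminus E) \leq \lambda\, \mu(Q)$ with $\lambda := 1 - (1-\delta)^2/M^2$. Since $M \geq 1$ (by Cauchy--Schwarz applied to $|Q|^2$) and $\delta < 1$, one has $\lambda \in (0,1)$, and clearly $\lambda$ depends only on $\delta$ and $M$, as required.

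There is no real obstacle here; the argument is the standard reverse-doubling step for $A_p$ weights. The only minor technical point is ensuring $|f|^{-2}$ is actually integrable on $Q$, which is exactly what the $A_2$ condition provides (in the degenerate case $\mu(Q) = 0$, the conclusion holds trivially). Because the hypothesis concerns a scalar weight, none of the matrix machinery developed earlier in the paper is needed for this step.
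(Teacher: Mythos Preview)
Your proof is correct and follows essentially the same argument as the paper: both apply Cauchy--Schwarz to $1 = |f|\cdot|f|^{-1}$ on $Q\setminus E$, invoke the $A_2$ bound to control $\int_Q |f|^{-2}$, and then pass to the complement to obtain $\lambda = 1 - (1-\delta)^2/C$. Your write-up is in fact a bit tidier, and the remarks on integrability of $|f|^{-2}$ and the degenerate case $\mu(Q)=0$ are nice touches that the paper omits.
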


\begin{proof}

\[|Q/E|^2 = \left\{\int_{Q/E}|f||f|^{-1}dA\right\}^2 \leq
\left\{\int_{Q/E}|f|^2dA\right\}\left\{\int_{Q/E}|f|^{-2}dA\right\}\]

\[\leq\left\{\int_{Q/E}|f|^2dA\right\}\left\{\int_{Q}|f|^{-2}dA\right\}
\leq
\left\{\int_{Q/E}|f|^2dA\right\}C|Q|^2\left\{\int_{Q}|f|^{2}dA\right\}^{-1}
\]
by our $A_2$ condition on $|f|^2$ this equals

\[(\left\{\int_{Q}|f|^2dA\right\}-\left\{\int_{E}|f|^2dA\right\})C|Q|^2\left\{\int_{Q}|f|^{2}dA\right\}^{-1}
\]\[=
C\left(1-\left\{\int_{E}|f|^2dA\right\}\left\{\int_{Q}|f|^{2}dA\right\}^{-1}\right)|Q|^2,\]

so we know that
\[|Q/E|^2 \leq C|Q|^2\left(1- \frac{\mu(E)}{\mu(Q)}\right)\] and thus

\[ \frac{|Q/E|^2}{|Q|^2} \leq C\frac{\mu(Q/E)}{\mu(Q)}.\]

Now we know that $\frac{|E|}{|Q|} \leq \delta < 1$ from our
hypothesis, this implies that $\frac{|Q/E|}{|Q|} \geq 1 - \delta >
0$. So we can now deduce that

\[ 0 <
\frac{(1 - \delta)^2}{C} \leq \frac{1}{C}\frac{|Q/E|^2}{|Q|^2} \leq
\frac{\mu(Q/E)}{\mu(Q)}.\]

This lets us now see that

\[ 1 = \frac{\mu(Q)}{\mu(Q)} = \frac{\mu(Q/E) + \mu(E)}{\mu(Q)} \geq
\frac{\mu(E)}{\mu(Q)} + \frac{(1-\delta)^2}{C}\] and hence

\[\frac{\mu(E)}{\mu(Q)} \leq 1 - \frac{(1-\delta)^2}{C}.\]

\end{proof}

The following lemma will be crucial to our application of the $A_2$
condition.
\begin{lem}{\label{same}}
If $F^*F$ has the $A_2$ condition and $J$ is a strictly positive
matrix then $JF^*FJ$ will have the $A_2$ condition. The $A_2$
constant of $JF^*FJ$ will depend on the $A_2$ bound of $F^*F$ and
the dimension only.
\end{lem}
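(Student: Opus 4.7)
The plan is to exploit two features simultaneously. First, since $J$ is a constant (not function-valued) matrix, it commutes with every integration $\frac{1}{|Q|}\int_Q \cdot\, dA$, so
\[
\frac{1}{|Q|}\int_Q JF^*FJ\,dA = J\!\left(\tfrac{1}{|Q|}\!\int_Q\! F^*F\,dA\right)\!J, \qquad \frac{1}{|Q|}\int_Q (JF^*FJ)^{-1}\,dA = J^{-1}\!\left(\tfrac{1}{|Q|}\!\int_Q\! F^{-1}F^{*-1}\,dA\right)\!J^{-1}.
\]
Second, the previous theorem characterised the $A_2$ condition for $F^*F$ as uniform boundedness of the averaging operators $P_Q : f\mapsto \chi_Q\frac{1}{|Q|}\int_Q f\,dA$ on $L^2(F^*F)$, and computed the operator norm of $P_Q$ on that space to be exactly the dyadic $A_2$ quantity for $F^*F$.

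The core of the proof is then a change-of-variables argument. Define $\Phi(f) = J^{-1}f$. Since $J=J^*$, a direct computation gives
\[
\langle JF^*FJ\cdot J^{-1}f,\, J^{-1}f\rangle_{\mathbb{C}^n} = \langle F^*F\,f,\, f\rangle_{\mathbb{C}^n},
\]
so $\Phi$ is an isometric isomorphism $L^2(F^*F)\to L^2(JF^*FJ)$. Because $J^{-1}$ is constant, $\Phi$ intertwines the averaging operators: $P_Q\Phi(f) = J^{-1}P_Q(f) = \Phi(P_Q f)$. Hence uniform boundedness of $P_Q$ on $L^2(F^*F)$ (which is given, by the previous theorem, from the hypothesis that $F^*F$ has $A_2$) transfers verbatim to uniform boundedness of $P_Q$ on $L^2(JF^*FJ)$ with the same operator norm. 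Re-applying the explicit norm computation from the proof of that theorem, now to the weight $JF^*FJ$, yields the $A_2$ condition for $JF^*FJ$ with the same bound.

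The only obstacle is the bookkeeping of constants in passing back and forth between operator norms on $M_{n\times n}(\mathbb{C})$ and the trace-type quantities the paper uses; these equivalences cost a factor depending only on $n$, which accounts for the claimed dimension dependence in the $A_2$ constant. As a sanity check, one can bypass this bookkeeping entirely by noting the identity $\|X^{1/2}Y^{1/2}\|_{op}^2 = \lambda_{\max}(XY)$ for positive $X,Y$: with $A = \langle F^*F\rangle_Q$ and $B=\langle F^{-1}F^{*-1}\rangle_Q$, the transformed product is $(JAJ)(J^{-1}BJ^{-1}) = J(AB)J^{-1}$, similar to $AB$ and therefore with identical spectral radius, which shows that in fact the $A_2$ constant is preserved exactly.
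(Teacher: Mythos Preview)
Your proof is correct, and in fact sharper than the paper's. The paper argues directly on the matrices: writing $A=\langle F^*F\rangle_Q$ and $B=\langle F^{-1}F^{*-1}\rangle_Q$, it bounds $\|(JAJ)^{1/2}(J^{-1}BJ^{-1})^{1/2}\|^2$ above by a dimensional constant times $\mathrm{tr}\big((JAJ)(J^{-1}BJ^{-1})\big)$, uses cyclicity of the trace to reduce this to $\mathrm{tr}(AB)$, and then bounds the trace back by a dimensional constant times $\|A^{1/2}B^{1/2}\|^2$. This passage through the trace is what produces the dimension dependence in the paper's statement.

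Your spectral argument bypasses the trace entirely: since $(JAJ)(J^{-1}BJ^{-1})=J(AB)J^{-1}$ is similar to $AB$, and $\|X^{1/2}Y^{1/2}\|^2=\rho(XY)$ for positive $X,Y$, the $A_2$ quantity is \emph{exactly} preserved, with no dimensional loss. Your isometry argument via $\Phi f=J^{-1}f$ between $L^2(F^*F)$ and $L^2(JF^*FJ)$ reaches the same conclusion by a different path, once one notes that $JF^*FJ=(FJ)^*(FJ)$ with $FJ$ still having analytic entries, so the norm computation from the preceding theorem applies verbatim to the new weight. Either way your hedging about ``bookkeeping of constants'' is unnecessary: both of your arguments give equality, which is strictly better than what the paper obtains.
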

\begin{proof}

\[||\left(\frac{1}{|I|}\int_I JF^*FJ\right)^{\frac{1}{2}}\left(\frac{1}{|I|}\int_I
(JF^*FJ)^{-1}\right)^{\frac{1}{2}}||^{2} \]\[=
||\left(\frac{1}{|I|}\int_I
JF^*FJ\right)^{\frac{1}{2}}\left(\frac{1}{|I|}\int_I
(JF^*FJ)^{-1}\right)\left(\frac{1}{|I|}\int_I
JF^*FJ\right)^{\frac{1}{2}}||\]

\[\leq C tr\left(\left(\frac{1}{|I|}\int_I
JF^*FJ\right)^{\frac{1}{2}}\left(\frac{1}{|I|}\int_I
(JF^*FJ)^{-1}\right)\left(\frac{1}{|I|}\int_I
JF^*FJ\right)^{2}\right)\] (where the constant $C$ depends only on
the dimension.)

\begin{multline*}
=C tr\left(\left(\frac{1}{|I|}\int_I
(F^*F)^{-1}\right)\left(\frac{1}{|I|}\int_I F^*F\right)\right)\\\leq
C^\prime ||\left(\frac{1}{|I|}\int_I
F^*F\right)^{\frac{1}{2}}\left(\frac{1}{|I|}\int_I
(F^*F)^{-1}\right)^{\frac{1}{2}}||^{\frac{1}{2}}.
\end{multline*}

$C^\prime$ again depending only on the dimension, thus giving us our
result.

\end{proof}

\begin{defn}
The dyadic maximal operator $M_{\Delta}$ is defined by

\[(M_{\Delta} f)(w) = \sup_{w \in Q}
\frac{1}{|Q|}\int_Q|f(z)|dA(z),\]

where the $Q$ are dyadic rectangles and $f \in L^2$.
\end{defn}

\begin{thm}{(The Calderon-Zygmund Decomposition Theorem.)}\label{calderonzygmund}
Let $f \in L^1(\mathbb{D})$, if we have $t
> 0$ such that the set $\Lambda = \left\{z \in \mathbb{D}: M_{\Delta}f(z)
> t\right\}$ is not the whole of $\mathbb{D},$ then we can decompose
$\Lambda$ into a disjoint union of dyadic intervals $Q_i$ such that
$t < \frac{1}{|Q_i|}\int_{Q_i}|f(z)|dA(z) < 8t.$

\end{thm}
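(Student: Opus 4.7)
The plan is the classical stopping-time construction for the dyadic maximal operator, adapted to the polar dyadic rectangles of the disk.

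First, fix $z \in \Lambda$. The dyadic rectangles containing $z$ form a totally ordered chain under inclusion, with the whole disk $\mathbb{D} = Q_{0,1,1}$ at the top. Among those rectangles in this chain whose average of $|f|$ exceeds $t$, the family is nonempty (since $M_{\Delta}f(z) > t$) and does not contain $\mathbb{D}$ itself (since $\Lambda \neq \mathbb{D}$ forces $\frac{1}{|\mathbb{D}|}\int_{\mathbb{D}}|f|\,dA \leq t$). I would pick $Q_z$ to be the largest element of the chain satisfying $\frac{1}{|Q|}\int_Q |f|\,dA > t$; such a maximum exists because the chain is indexed by the level $j$. Each $Q_z$ then has a strictly larger dyadic parent $\tilde{Q}_z$, one level coarser, on which the average of $|f|$ is at most $t$ by maximality.

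Next, I would collect the distinct rectangles $\{Q_i\}$ produced in this way. Because any two dyadic rectangles are either disjoint or comparable by inclusion, and two maximal choices cannot be properly nested, the $Q_i$ are automatically pairwise disjoint. Moreover $\Lambda = \bigcup_i Q_i$: each $z \in \Lambda$ lies in its own $Q_z$, which coincides with some $Q_i$; conversely any point $z' \in Q_i$ satisfies $M_{\Delta} f(z') \geq \frac{1}{|Q_i|}\int_{Q_i}|f|\,dA > t$ by testing with $Q_i$ itself, so $z' \in \Lambda$.

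The lower bound $t < \frac{1}{|Q_i|}\int_{Q_i}|f|\,dA$ is just the construction. For the upper bound, the stopping-time condition on the parent gives
\[
\int_{Q_i}|f|\,dA \;\leq\; \int_{\tilde{Q}_i}|f|\,dA \;\leq\; t\,|\tilde{Q}_i|,
\]
so the remaining task is to bound the ratio $|\tilde{Q}_i|/|Q_i|$ uniformly. A direct computation from the definition yields $|Q_{j,k,l}|$ proportional to $2^{-3j}(2k-1)$, and each parent $Q_{j-1,k',l'}$ contains exactly four children at level $j$, indexed by $k \in \{2k'-1,2k'\}$ and $l \in \{2l'-1,2l'\}$. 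The worst ratio occurs at the innermost parent ($k'=1$), where the two innermost children have $|Q|/|\tilde{Q}| = 1/8$; in all other cases the ratio is smaller. Hence $\tfrac{1}{|Q_i|}\int_{Q_i}|f|\,dA \leq 8t$, which is the constant in the statement.

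The only point that is not entirely routine is this parent-to-child area computation: unlike Euclidean dyadic cubes, the polar rectangles have a variable area ratio depending on how close to the origin one is, so one must explicitly identify that $8$ is the worst case and confirm it equals the constant appearing in the theorem. Beyond that verification, the argument is the standard Calderón–Zygmund stopping-time decomposition.
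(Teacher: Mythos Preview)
Your argument is correct and is precisely the standard stopping-time construction that the paper defers to its references \cite{invproducts} and \cite{products2}; the only additional content you supply is the explicit parent--child area computation $|Q_{j,k,l}| = 2^{-3j}(2k-1)$ justifying the constant $8$, which is exactly the point one must check when adapting the Euclidean argument to these polar dyadic rectangles. One small quibble: as written your bound gives $\frac{1}{|Q_i|}\int_{Q_i}|f|\,dA \leq 8t$ rather than the strict inequality in the statement, but this is immaterial for the application and is likely a typo in the theorem as stated.
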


\begin{proof}
The proof of this is exactly as in \cite{invproducts} and
\cite{products2}.
\end{proof}

Compare this next lemma with Proposition 4.14 in \cite{products2}.
\begin{lem}\label{notall}
The trace of $F^*F$ satisies the following;
\begin{enumerate}
    \item \[ tr(F^*F) \leq M_{\Delta} tr(F^*F) \]on $\mathbb{D}$
and
\item \[ \int_{\mathbb{D}}tr(F^*(z)F(z)) dA(z) \leq M_{\Delta}
tr(F^*F)(0) \leq (4/3)^2 \int_{\mathbb{D}}tr(F^*(z)F(z)) dA(z).\]

\end{enumerate}
\end{lem}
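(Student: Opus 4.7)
Both parts reduce to two elementary properties of the scalar function $tr(F^*F)(z) = \sum_{i,j} |f_{ij}(z)|^2$: it is continuous on $\mathbb{D}$ since the entries of $F$ are analytic, and it satisfies the reproducing kernel pointwise bound
\[
tr(F^*F)(z) \;\leq\; \frac{1}{(1-|z|^2)^2}\int_{\mathbb{D}} tr(F^*F)(w)\,dA(w),
\]
obtained by writing $f_{ij}(z) = \langle f_{ij}, K_z\rangle$, applying Cauchy--Schwarz together with $\|K_z\|^2 = (1-|z|^2)^{-2}$ to each entry, and summing over $i,j$.

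For part (1), fix $z_0 \in \mathbb{D}$ and take the nested sequence of dyadic rectangles $Q^{(j)}$ containing $z_0$ at successive levels $j = 0, 1, 2, \ldots$ (the dyadic partition refines by halving both the radial and angular coordinates, so these are nested and shrink in diameter to $0$). Continuity of $tr(F^*F)$ at $z_0$ then gives
\[
\frac{1}{|Q^{(j)}|}\int_{Q^{(j)}} tr(F^*F)\,dA \;\longrightarrow\; tr(F^*F)(z_0),
\]
so in particular $M_\Delta tr(F^*F)(z_0) \geq tr(F^*F)(z_0)$.

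For part (2), the lower bound is immediate because $\mathbb{D} = Q_{0,1,1}$ is itself a dyadic rectangle containing $0$ with $|\mathbb{D}| = 1$, so $M_\Delta tr(F^*F)(0) \geq \int_\mathbb{D} tr(F^*F)\,dA$. For the upper bound, observe that the dyadic rectangles containing $0$ are exactly those with $k = 1$, namely $Q_{j,1,l}$ for $j \geq 0$ and $1 \leq l \leq 2^j$. For any such $Q$ with $j \geq 1$, every $z \in Q$ satisfies $|z| \leq 2^{-j} \leq 1/2$, whence $(1-|z|^2)^{-2} \leq (3/4)^{-2} = (4/3)^2$; the reproducing kernel bound then yields
\[
\frac{1}{|Q|}\int_Q tr(F^*F)\,dA \;\leq\; \sup_{z \in Q} tr(F^*F)(z) \;\leq\; (4/3)^2 \int_\mathbb{D} tr(F^*F)\,dA.
\]
The remaining case $j = 0$ is $Q = \mathbb{D}$, for which the average is exactly $\int_\mathbb{D} tr(F^*F)\,dA$ and the bound is trivial. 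Taking the supremum over all $Q \ni 0$ gives the claim.

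There is no serious obstacle; the only point worth flagging is that the constant $(4/3)^2$ is forced precisely by the worst case $|z| = 1/2$, arising from the level-$1$ rectangles $Q_{1,1,1}$ and $Q_{1,1,2}$ (the two half-disks of radius $1/2$), all finer rectangles being contained in strictly smaller disks.
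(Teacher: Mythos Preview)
Your proof is correct and follows essentially the same route as the paper: for part (1) you spell out the continuity argument that the paper defers to Proposition~4.14 of \cite{products2}, and for part (2) both you and the paper use the reproducing kernel bound $F^*(u)F(u) \leq \|K_u\|^2 \int_{\mathbb{D}} F^*F\,dA$ on the proper dyadic rectangles through $0$, which lie in the disk $|z|\leq 1/2$, to produce the factor $(4/3)^2$. The only cosmetic difference is that the paper phrases the containment as $Q \subset D(0,1/2)$ (the pseudohyperbolic disk, here Euclidean) and derives the matrix inequality before taking the trace, whereas you sum the scalar Cauchy--Schwarz bounds entrywise; these are equivalent.
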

\begin{proof}

\begin{enumerate}
\item This follows from Proposition 4.14 in \cite{products2}. We
just need to note that $tr(F^*F)$ is continuous and the proof works
as it is.
\item $\mathbb{D}$ is a dyadic rectangle containing $0$ so
\[M_{\Delta}tr(F^*F)(0) \geq
\frac{1}{|\mathbb{D}|}\int_{\mathbb{D}}tr(F^*(z)F(z)) dA(z) =
\int_{\mathbb{D}}tr(F^*(z)F(z)) dA(z).\]

Let us take a dyadic rectangle $Q$ containing $0$ which is not the
unit disk. We know that $Q$ will be contained in the
pseudohyperbolic disk $D(0,\frac{1}{2}).$

Let $\textbf{e}\in\mathbb{C}^n$, then as $F \in
L^2_a(\mathbb{C}^n),$

\[\left\langle F(u)\textbf{e},F(u)\textbf{e}\right\rangle  =
||F(u)\textbf{e}||^2_{\mathbb{C}^n}  \]

\[=||\int F(z)\overline{K_u(z)}dA(z)\textbf{e}||^2_{\mathbb{C}^n} \leq \left\{\int||F(z)\textbf{e}|||K_u(z)|dA(z)\right\}^2  \]
\[\leq \int \left\langle F\textbf{e},F\textbf{e}\right\rangle  dA(z)||K_u(z)||^2_{L^2} = \left\langle \int F^*(z)F(z)
dA(z) \textbf{e}, \textbf{e}\right\rangle ||K_u(z)||^2_{L^2}.\]

So

\begin{multline*} F^*(u)F(u) \leq \int F^*(z)F(z)dA(z)||K_u||^2_2 \leq \int
F^*(x)F(x)dA(x)\frac{1}{(1-{\frac{1}{2}}^2)^2} \\=
{\left(\frac{4}{3}\right)}^2\int F^*(x)F(x)dA(x)
\end{multline*}

on each $Q$ containing $0$ which is not $\mathbb{D}.$

So \[tr(F^*(u)F(u)) \leq
tr\left({\left(\frac{4}{3}\right)}^2\int_{\mathbb{D}}
F^*(x)F(x)dA(x)\right)\] for $u \in Q.$ Hence

\[  \frac{1}{|Q|} \int_Q
tr(F^*(z)F(z))dA(z) \]\[\leq
{\left(\frac{4}{3}\right)}^2\int_{\mathbb{D}} tr(F^*(x)F(x))dA(x)
\]

and so
\[M_{\Delta}
tr(F^*F)(0) \leq (4/3)^2 \int_{\mathbb{D}}tr(F^*(z)F(z)) dA(z).\]

\end{enumerate}

\end{proof}

The proof of the following theorem follows the lines of of Theorem
2.1 in \cite{invproducts} and Theorem 4.1 in \cite{products2}. It
contains the key to the proof of Theorem \ref{final} i.e. the
reverse H\"{o}lder property.

\begin{thm}\label{revhol}
If $F^*F$ satisfies $A_2$,  then there exists $\epsilon
> 0$ such that $\int (tr(F^*(z)F(z)))^{1+\epsilon}dA(z) \leq C\int
(tr(F^*(z)F(z)))dA(z)^{1+\epsilon}$  with $C$ and $\epsilon$
dependent only on the $A_2$ constant.
\end{thm}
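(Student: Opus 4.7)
The plan is to reduce the statement to a scalar $A_2$ problem and then run a Gehring-type good-$\lambda$ argument driven by the Calderon-Zygmund decomposition. Set $w := tr(F^*F)$. The preceding lemma guarantees that $w$ is a scalar $A_2$ weight with constant depending only on the matrix $A_2$ constant of $F^*F$ and on $n$. From this point on the matrix structure is no longer needed; the entire argument is carried out for the scalar weight $w$, using only its scalar $A_2$ condition together with Lemma \ref{notall}, which supplies both $w \leq M_\Delta w$ pointwise and $M_\Delta w(0) \leq (4/3)^2 \int_\mathbb{D} w\,dA$.

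Set $s_0 := (4/3)^2 \int_\mathbb{D} w\,dA$, and for $t > 0$ write $\Lambda_t := \{M_\Delta w > t\}$ and $\phi(t) := \int_{\Lambda_t} w\,dA$. For any $t > s_0$ the point $0$ lies outside $\Lambda_t$, so $\Lambda_t \neq \mathbb{D}$ and Theorem \ref{calderonzygmund} produces a disjoint family of dyadic rectangles $\{Q_j\}$ with $\Lambda_t = \bigsqcup_j Q_j$ and $t < \frac{1}{|Q_j|}\int_{Q_j} w\,dA < 8t$. Fix $\beta > 8$. Since $\Lambda_{\beta t} \subset \Lambda_t$, every maximal dyadic cube $R$ at level $\beta t$ is contained in exactly one $Q_j$, and collecting the inequalities $|R| \leq (\beta t)^{-1}\int_R w\,dA$ over those $R$ inside a fixed $Q_j$ yields the crucial sub-cube bound
\[ |\Lambda_{\beta t} \cap Q_j| \leq \frac{1}{\beta t}\int_{Q_j} w\,dA < \frac{8}{\beta}\,|Q_j|. \]

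Set $\delta := 8/\beta < 1$. Lemma \ref{fairshare}, applied with $|f|^2 = w$ and $E = \Lambda_{\beta t} \cap Q_j$, supplies a constant $\lambda \in (0,1)$ depending only on $\delta$ and the $A_2$ constant of $w$ for which $\int_{\Lambda_{\beta t}\cap Q_j} w\,dA \leq \lambda \int_{Q_j} w\,dA$ for every $j$. Summing over $j$ gives the contraction $\phi(\beta t) \leq \lambda \phi(t)$ on $(s_0,\infty)$, which iterates to $\phi(\beta^k s_0) \leq \lambda^k \phi(s_0) \leq \lambda^k \int_\mathbb{D} w\,dA$ for every integer $k \geq 0$.

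It remains to integrate. By the layer-cake formula and $\int_{\{w>t\}} w\,dA \leq \phi(t)$,
\[ \int_\mathbb{D} w^{1+\epsilon}\,dA = \int_0^\infty \epsilon t^{\epsilon-1}\int_{\{w>t\}} w\,dA\,dt \leq \int_0^{s_0} \epsilon t^{\epsilon-1}\phi(t)\,dt + \sum_{k=0}^\infty \int_{\beta^k s_0}^{\beta^{k+1} s_0}\epsilon t^{\epsilon-1}\phi(t)\,dt. \]
The first integral is at most $s_0^\epsilon \int_\mathbb{D} w\,dA$, while the $k$-th term of the sum is at most $\lambda^k (\beta^{k+1} s_0)^\epsilon \int_\mathbb{D} w\,dA$, so the tail is a geometric series with ratio $\lambda \beta^\epsilon$. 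Choosing $\epsilon > 0$ small enough that $\lambda \beta^\epsilon < 1$, the series converges to yield $\int_\mathbb{D} w^{1+\epsilon}\,dA \leq C s_0^\epsilon \int_\mathbb{D} w\,dA = C' \bigl(\int_\mathbb{D} w\,dA\bigr)^{1+\epsilon}$, as desired. The real obstacle is the good-$\lambda$ step, i.e.\ combining the sub-cube bookkeeping that produces $|\Lambda_{\beta t} \cap Q_j| \leq (8/\beta)|Q_j|$ with Lemma \ref{fairshare} to extract the contraction $\phi(\beta t) \leq \lambda \phi(t)$; once that is in place the iteration and the distribution-function integration are routine.
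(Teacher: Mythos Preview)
Your proposal is correct and follows essentially the same route as the paper: reduce to the scalar weight $w=\operatorname{tr}(F^*F)$ via the preceding lemma, use Lemma~\ref{notall} to ensure $0\notin\Lambda_t$ so the Calderon--Zygmund decomposition applies, obtain the sub-cube area bound, feed it into Lemma~\ref{fairshare} to get the $\mu$-contraction, iterate, and sum a geometric series. The only difference is cosmetic: the paper fixes the discrete scale $\beta=16$ (levels $2^{4k+1}\int w$) and sums directly over $E_k\setminus E_{k+1}$, whereas you keep $\beta>8$ generic and package the final step as a layer-cake integral; the two organizations are equivalent.
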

\begin{proof}

For each $k$ define \[E_k = \left\{z \in \mathbb{D}:
M_\Delta(tr(F^*F))(z) >
2^{4k+1}\int_{\mathbb{D}}(tr(F^*(z)F(z)))dA(z)\right\}.\] By Lemma
\ref{notall} we can see that \[M_{\Delta} tr(F^*(0)F(0)) \]\[ \leq
(4/3)^2 \int_{\mathbb{D}}tr(F^*(z)F(z)) dA(z) <
2^{4k+1}\int_{\mathbb{D}}tr(F^*(z)F(z)) dA(z)\] for all $k$. So we
know that each $E_k$ is not the whole disk (as $0$ is not contained
in it) and hence we can do a Calderon-Zygmund decomposition. So for
each $E_k$ we have a disjoint union of dyadic rectangles $Q_i$ whose
union is equal to $E_k$ and
\[2^{4k+1}\int_{\mathbb{D}}(tr(F^*(z)F(z)))dA(z) <
\frac{1}{|Q_i|}\int_{Q_i}tr(F^*(z)F(z)) dA(z) \]\[<
2^{4(k+1)}\int_{\mathbb{D}}(tr(F^*(z)F(z)))dA(z).\]

Two inequalities we will use from this are;

\[|Q_i| < 2^{-4k-1}\left\{\int_{\mathbb{D}}(tr(F^*(z)F(z)))dA(z)\right\}^{-1}\int_{Q_i}tr(F^*(z)F(z))
dA(z)\]

and
\[\int_{Q_i}tr(F^*(z)F(z)) dA(z) <
|Q_i|2^{4(k+1)}\int_{\mathbb{D}}(tr(F^*(z)F(z)))dA(z).\]

We now take a maximal dyadic rectangle $Q$ in $E_{k-1}$ (which is
larger than $E_k$) and note that

\[|E_k \cap Q| = \sum_{Q_i \subset Q}|Q_i|  \](where the $Q_i$ denote the maximal dyadic rectangles in
$E_k$)\[<\sum_{Q_i \subset Q}
2^{-4k-1}\left\{\int_{\mathbb{D}}(tr(F^*(z)F(z)))dA(z)\right\}^{-1}\int_{Q_i}tr(F^*(z)F(z))
dA(z) \]
\[\leq2^{-4k-1}\left\{\int_{\mathbb{D}}(tr(F^*(z)F(z)))dA(z)\right\}^{-1}\int_{Q}tr(F^*(z)F(z))
dA(z)\]

due to the dyadic decomposition of $E_k.$

But as $Q$ is also part of a Calderon-Zygmund decomposition (this
time for $E_{k-1}$) we can also see that

\[\int_{Q}tr(F^*(z)F(z))
dA(z) < |Q|2^{4k}\int_{\mathbb{D}}(tr(F^*(z)F(z)))dA(z).\]

Putting the last two inequalities together we see that

\[|E_k \cap Q|  \]\[<2^{-4k-1}\left\{\int_{\mathbb{D}}(tr(F^*(z)F(z)))dA(z)\right\}^{-1}|Q|2^{4k}\int_{\mathbb{D}}(tr(F^*(z)F(z)))dA(z)
\]\[=
\frac{1}{2}|Q|.\]

We are now in a position to use Lemma \ref{fairshare} as
$tr(F^*(z)F(z)))$ satisfies the scalar $A_2$ condition and $|E_k
\cap Q| \leq \frac{1}{2}|Q|.$ So with $\frac{1}{2}$ being our
$\delta$ in \ref{fairshare}, we can deduce that
\[\mu(E_k\cap Q) < \lambda \mu(Q)\] for some $0 < \lambda < 1$ independent of k, with $d\mu(z)
= tr(F^*(z)F(z)))dA(z).$ We can now sum over all maximal dyadic
rectangles in $E_{k-1}$ and see that

\[\mu(E_k) = \sum_{Q} \mu(E_k \cap Q) < \lambda \sum_{Q} \mu(Q) =
\lambda \mu(E_{k-1}).\]

Let us take a moment here to note that $\lambda$ depends only on our
$A_2$ bound of $tr(F^*(z)F(z))$, (we can see this from Lemma
\ref{fairshare}), and that this $A_2$ bound is controlled by the
matrix $A_2$ bound for $F^*F$ and the dimension.

We have established that for each $k \geq 1$, $\mu(E_k) < \lambda
\mu(E_{k-1})$ and so \[\mu(E_k) < \lambda^k \mu(E_0) = \lambda^k
\int_{E_0} tr(F^*(z)F(z))dA(z)  \leq \lambda^k\int_{\mathbb{D}}
tr(F^*(z)F(z))dA(z).\]

Now let us move on and look at $\int_{\mathbb{D}}
tr(F^*(z)F(z))^{1+\epsilon}dA(z)$ for some $\epsilon > 0.$

From Lemma \ref{notall} we know that $tr(F^*F)(z) \leq M_{\Delta}
tr(F^*F)(z)$ on the disk so

\[\int_{\mathbb{D}}
tr(F^*(z)F(z))^{1+\epsilon}dA(z) \leq \int_{\mathbb{D}}
tr(F^*(z)F(z))\left\{M_{\Delta}
tr(F^*F)(z)\right\}^{\epsilon}dA(z)\]

\[=\int_{x:M_{\Delta} tr(F^*F)(x) \leq \int_{\mathbb{D}}tr(F^*F(z))dA(z)}tr(F^*(z)F(z))\left\{M_{\Delta} tr(F^*F)(z)\right\}^{\epsilon}dA(z)
\]\[+ \sum_k\int_{E_k-E_{k+1}}tr(F^*(z)F(z))\left\{M_{\Delta}
tr(F^*F)(z)\right\}^{\epsilon}dA(z)\]
\[\leq \left\{\int_{\mathbb{D}} tr(F^*F)(z)\right\}^{1+ \epsilon} + \sum_k 2^{(4(k+1) +
1)\epsilon}\left\{\int_{\mathbb{D}}tr(F^*F)(z)dA(z)\right\}^{\epsilon}\mu(E_k)
\]

\[\leq \left\{\int_{\mathbb{D}} tr(F^*F)(z)\right\}^{1+ \epsilon} +\]\[ \sum_k 2^{(4(k+1) +
1)\epsilon}\left\{\int_{\mathbb{D}}tr(F^*F)(z)dA(z)\right\}^{\epsilon}\lambda^k
\int_{\mathbb{D}}tr(F^*F)(z)dA(z) \]

\[=\left\{\int_{\mathbb{D}} tr(F^*F)(z)\right\}^{1+ \epsilon} + \sum_k 2^{(4(k+1) +
1)\epsilon}\left\{\int_{\mathbb{D}}tr(F^*F)(z)dA(z)\right\}^{1+\epsilon}\lambda^k
 \]

\[=\left\{\int_{\mathbb{D}} tr(F^*F)(z)\right\}^{1+ \epsilon} \left(1 +  2^{5\epsilon}\sum_k
(\lambda2^{4\epsilon})^k\right).\]

If we choose $\epsilon$ such that $0 < \lambda 2^{4\epsilon} < 1$
then this will become

\[\left\{\int_{\mathbb{D}} tr(F^*F)(z)\right\}^{1+ \epsilon} \left(1 +  2^{5\epsilon}\frac{1}{1-\lambda2^{4\epsilon}}\right)\]

thus for any $0 < \epsilon^{\prime} \leq \epsilon$ our reverse
H\"{o}lder inequality will hold.

\end{proof}
\begin{cor}{\label{revhol2}}
If $F^*F$ satisfies $A_2$ and $J$ is a positive matrix then there
exists $\epsilon
> 0$ such that $\int (tr(JF^*(z)F(z)J))^{1+\epsilon}dA(z) \leq C\int
(tr(JF^*(z)F(z)J))dA(z)^{1+\epsilon}$. The same $\epsilon$ and
constant $C$ hold for all positive matrices $J$ and $\epsilon$
depends only on the dimension and the $A_2$ constant of $F^*F$.
\end{cor}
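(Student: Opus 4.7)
The plan is to reduce Corollary \ref{revhol2} directly to Theorem \ref{revhol} by observing that $JF^*FJ = (FJ)^*(FJ)$, so the left-hand side of the desired reverse H\"{o}lder inequality is exactly $\int (tr(\tilde{F}^*\tilde{F}))^{1+\epsilon}dA$ where $\tilde{F} = FJ$. If we can show that $\tilde{F}^*\tilde{F}$ has the matrix $A_2$ condition with a bound that is independent of $J$, then Theorem \ref{revhol} applied to $\tilde{F}$ delivers the inequality with $\epsilon$ and $C$ depending only on that bound.

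The first step is precisely Lemma \ref{same}, which says that if $F^*F$ has the $A_2$ condition then $JF^*FJ$ also has the $A_2$ condition, and crucially the resulting $A_2$ constant depends only on the $A_2$ constant of $F^*F$ and on the dimension $n$ --- not on the positive matrix $J$. This uniform control is the entire reason the corollary can be stated with a single $\epsilon$ and $C$ valid for every positive $J$.

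With that in hand, I would simply invoke Theorem \ref{revhol} applied to the matrix-valued analytic function $FJ$: there exists $\epsilon>0$ and $C$, depending only on the $A_2$ constant of $(FJ)^*(FJ) = JF^*FJ$, such that
\[
\int (tr(JF^*(z)F(z)J))^{1+\epsilon}\,dA(z) \;\leq\; C\left(\int tr(JF^*(z)F(z)J)\,dA(z)\right)^{1+\epsilon}.
\]
Since by the previous paragraph the $A_2$ constant of $JF^*FJ$ is controlled solely by the $A_2$ constant of $F^*F$ and the dimension, the same $\epsilon$ and $C$ work for every positive $J$, which is exactly the statement of the corollary.

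There is essentially no obstacle here beyond bookkeeping: the content of the corollary is wholly concentrated in Lemma \ref{same}, which already did the work of showing that conjugation by an arbitrary positive matrix does not degrade the $A_2$ constant in a $J$-dependent way. The only thing to be careful about is to cite Theorem \ref{revhol} with the correct dependence of its constants (noting that the theorem's $\epsilon$ and $C$ depend only on the $A_2$ constant of the relevant matrix weight), so that feeding in the uniform bound from Lemma \ref{same} yields a choice of $\epsilon$ and $C$ valid uniformly in $J$.
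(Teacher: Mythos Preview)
Your proposal is correct and is exactly the paper's approach: the paper's proof is the one-liner ``This follows from \ref{revhol} and \ref{same},'' and you have simply unpacked that line, noting that $JF^*FJ=(FJ)^*(FJ)$ so Theorem \ref{revhol} applies to $\tilde F=FJ$, with Lemma \ref{same} guaranteeing the $A_2$ constant of $JF^*FJ$ is controlled uniformly in $J$.
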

\begin{proof}
This follows from \ref{revhol} and \ref{same}.
\end{proof}

\subsection{Proof of Theorem \ref{trace}.}

Two easy lemmas follow before the proof of the Theorem \ref{trace}.

\begin{lem}\label{inverse} Let $F$ and $G$ be matrices
consisting of Bergman space $L_a^2(\mathbb{D})$ functions. If
$FG^*GF^* > \eta I$ and $T_FT_{G^*}$ is bounded, then the Toeplitz
product $T_FT_{G^*}$ is invertible.

\end{lem}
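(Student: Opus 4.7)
The strategy is to show the bounded operator $T := T_F T_{G^*}$ is invertible by proving both $T$ and its adjoint $T^* = T_G T_{F^*}$ are bounded below on $L_a^2(\mathbb{C}^n)$. The hypothesis is symmetric in $F \leftrightarrow G$: with $A = FG^*$ square, the condition $AA^* > \eta I$ is equivalent to $A^*A = GF^*FG^* > \eta I$, so whatever argument gives the lower bound for $T$ applies verbatim to $T^*$, and the two bounded-below properties together give invertibility.

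A first concrete step is to compute $T$ on the atoms $k_w e$ (normalized reproducing kernel times a constant vector in $\mathbb{C}^n$). Since $F$ is analytic, $T_F$ is multiplication by $F$, and the identity $P(\bar g\, k_w) = \overline{g(w)}\,k_w$ for analytic $g$ gives $T_{G^*}(k_w e) = k_w\,G^*(w)e$. Hence
\[ T(k_w e) = F \cdot k_w \cdot G^*(w)e, \qquad \|T(k_w e)\|^2 = \langle G(w) B(F^*F)(w) G^*(w)e,\, e\rangle. \]
Lemma \ref{berezin} gives $B(F^*F)(w) \ge F^*(w) F(w)$, so combined with the hypothesis
\[ \|T(k_w e)\|^2 \ge \langle G(w)F^*(w)F(w)G^*(w)\,e,\,e\rangle > \eta\|e\|^2 = \eta\|k_w e\|^2. \]
So $T$ is bounded below on the family of atoms.

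Upgrading this lower bound from atoms to arbitrary $u \in L_a^2(\mathbb{C}^n)$ is where the full $A_2$/reverse-H\"older machinery developed in this section is brought in. Boundedness of $T$ together with Theorem \ref{trace} gives $tr(B(F^*F)B(G^*G))$ uniformly bounded; combined with the pointwise positivity, the earlier lemma bounding $\|B(F^{-1}F^{*-1})^{1/2}B(F^*F)^{1/2}\|$ and Lemma \ref{ap} place $F^*F$ in the matrix $A_2$ class, and Corollary \ref{revhol2} delivers a reverse H\"older inequality for $F^*F$ with some $\epsilon>0$; analogous statements hold for $G^*G$ by symmetry. I would then decompose
\[ Tu = FG^*u - F\cdot(I-P)(G^*u), \]
use the pointwise estimate $\|FG^*u\|_{L^2}^2 = \int \langle GF^*FG^*\,u,\,u\rangle\, dA \ge \eta\|u\|^2$ for the principal term, and control the Hankel-type cross term $\|F\cdot(I-P)(G^*u)\|$ by a H\"older split at exponent $1+\epsilon/2$ that mirrors the $2+\epsilon$ argument in Theorem \ref{eps} and uses $L^p$-boundedness of the Bergman projection.

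The main obstacle I anticipate is the cross-term estimate. Since $M_F$ is not $L^2$-bounded, one cannot bound $\|F\cdot(I-P)(G^*u)\|$ by $\|(I-P)(G^*u)\|$ directly; the reverse H\"older integrability of $F^*F$ is precisely what compensates, via a weighted H\"older split whose other factor is tamed by $L^p$-boundedness of $P$. Balancing the exponents so that the cross term is strictly smaller than the principal lower bound $\eta\|u\|^2$ is the crux of the proof; once established, the symmetric argument under $F\leftrightarrow G$ yields bounded-below-ness of $T^*$, and hence invertibility of $T$.
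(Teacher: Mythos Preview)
Your approach is far more elaborate than what the lemma requires, and the crucial step does not close. A H\"older/reverse-H\"older estimate on the Hankel-type term $\|F\cdot(I-P)(G^*u)\|$ can at best produce an upper bound $C\|u\|$ with $C$ depending on the $A_2$ data of $F^*F$ and $G^*G$; nothing in the hypotheses forces $C<\sqrt{\eta}$, so the triangle inequality $\|Tu\|\ge\|FG^*u\|-\|F(I-P)(G^*u)\|$ yields no lower bound. You need \emph{smallness} of the cross term relative to $\eta$, and reverse H\"older only gives \emph{boundedness}. (A further warning sign: for $F,G$ merely in $L^2_a$, the two pieces of your decomposition are individually not in $L^2$ in general.)

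The paper bypasses all of this by exhibiting the inverse explicitly. From $FG^*GF^*>\eta I$ one reads off that $(FG^*)^{-1}=G^{*-1}F^{-1}$ has $L^\infty$ entries, so $T_{G^{*-1}F^{-1}}$ is bounded; since $F^{-1}$ is analytic this factors as $T_{G^{*-1}}T_{F^{-1}}$. One then checks on the dense families $\{k_w e_j\}$ and $\{Fk_w e_j\}$, using $T_{G^*}(k_w e)=G^*(w)k_w e$ and $T_{F^{-1}}F=I$, that $T_{G^{*-1}}T_{F^{-1}}$ is a two-sided inverse for $T_FT_{G^*}$. No $A_2$ or reverse-H\"older machinery is used here; that apparatus is reserved for establishing \emph{boundedness} of $T_FT_{G^*}$ in the ``$\Leftarrow$'' direction of Theorem~\ref{final}, after which the present lemma supplies invertibility in one stroke.
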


\begin{proof}
$F^*GG^*F > \eta I$ implies that $G^{*-1}F^{-1}F^{*-1}G^{-1}$ is
bounded and so the operator $T_{G^{*-1}F^{-1}} =
T_{G^{*-1}}T_{F^{-1}}$ is bounded. It remains to note that
\[(T_FT_{G^*})T_{G^{*-1}}T_{F^{-1}}F(k_w,0,0,\ldots) =
F(k_w,0,0,\ldots)\] and
\[T_{G^{*-1}}T_{F^{-1}}(T_FT_{G^*})(k_w,0,0,\ldots) =
(k_w,0,0,\ldots),\] and that these also hold for
$(0,\ldots,k_w,\ldots).$ This implication holds because the linear
spans of $\left\{F(0,\ldots,k_w,\ldots)\right\}$ and
$\left\{(0,\ldots,k_w,\ldots)\right\}$ form dense subspaces.
\end{proof}

\begin{lem}
If the trace of a positive matrix $A$ is less than some constant
$\lambda > 0$ then $A < C I$ for some constant $C > 0$ depending
only on $\lambda$ and the dimension, $I$ being the identity matrix.

\end{lem}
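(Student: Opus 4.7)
The plan is to use the spectral decomposition of the positive matrix $A$. Since $A$ is positive (i.e., Hermitian with non-negative eigenvalues), we can write $A = U D U^*$ for some unitary $U$ and diagonal $D = \operatorname{diag}(\lambda_1,\ldots,\lambda_n)$ with each $\lambda_i \geq 0$. The trace is unitarily invariant, so $\operatorname{tr}(A) = \sum_{i=1}^n \lambda_i < \lambda$.

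From this, since each $\lambda_i$ is non-negative, we immediately get $\lambda_i \leq \sum_{j=1}^n \lambda_j < \lambda$ for every $i$. Consequently the operator norm satisfies $\|A\| = \max_i \lambda_i < \lambda$, and so in the sense of L\"owner ordering we have $A \leq \|A\| I < \lambda I$. Taking $C = \lambda$ (or any $C > \lambda$) gives the claimed inequality $A < CI$, with $C$ depending only on $\lambda$. In fact, the dimension does not even enter, though the statement allows for such dependence.

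There is no real obstacle here; the only point to note is that positivity of $A$ is what makes the bound on a single eigenvalue follow from the bound on the sum. Without positivity one would have no such control, since cancellations in the trace could hide large individual eigenvalues. A one-line proof suffices and that is exactly what I would write.
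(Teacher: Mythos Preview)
Your argument is correct; the paper's own proof consists of the single word ``Trivial,'' and your spectral decomposition argument is precisely the elementary computation that word stands in for. Your observation that the bound $C=\lambda$ works independently of the dimension is also accurate and slightly sharpens the statement.
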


\begin{proof}
Trivial.
\end{proof}

\begin{proof}[Proof of Theorem \ref{trace}.]

"$\Leftarrow$"  From Lemma \ref{ap} we know that $F^*F$ satisfies
our $A_2$ condition. Then by Corollary \ref{revhol2},

\begin{equation}\label{oureq}
\int
(tr(((G^*G)(x))^{\frac{1}{2}}(F^*F)(z)((G^*G)(x))^{\frac{1}{2}}))^{1+\epsilon}dA(z)
\leq
\end{equation}
\[C\int
(tr(((G^*G)(x))^{\frac{1}{2}}(F^*F)(z)((G^*G)(x))^{\frac{1}{2}}))dA(z)^{1+\epsilon}
\]
holds for all $x \in \mathbb{D}$ with some $\epsilon > 0$ and a
constant $C$ independent of $x$. Note here that we need to use the
fact that $G^*G$ is strictly positive.

We can also see that $G^*G$ satisfies our $A_2$ condition, so a
similar reverse H\"{o}lder will hold;
\[\int\left(tr\left(\left\{\int(F^*F)(z)dA(z)\right\}^{\frac{1}{2}}(G^*G)(x)\left\{\int(F^*F)(z)dA(z)\right\}^{\frac{1}{2}}\right)\right)^{1+\epsilon'}dA(x)
 \]

\[\leq C\left(\int
\left(tr\left(\left\{\int(F^*F)(z)dA(z)\right\}^{\frac{1}{2}}(G^*G)(x)\left\{\int(F^*F)(z)dA(z)\right\}^{\frac{1}{2}}\right)\right)dA(x)\right)^{(1+\epsilon')}\]

So let us set $\epsilon = \min\left\{\epsilon,\epsilon'\right\}.$

Thus integrating both sides of the reverse H\"{o}lder inequality
(\ref{oureq}) with respect to $x$, we get
\[\int\int
(tr(((G^*G)(x))^{\frac{1}{2}}(F^*F)(z)((G^*G)(x))^{\frac{1}{2}}))^{1+\epsilon}dA(z)dA(x)
 \]

\[\leq C\int\left\{\int
(tr(((G^*G)(x))^{\frac{1}{2}}(F^*F)(z)((G^*G)(x))^{\frac{1}{2}}))dA(z)\right\}^{1+\epsilon}dA(x)\]

\[=C\int
\left(tr\left(((G^*G)(x))^{\frac{1}{2}}\int(F^*F)(z)dA(z)((G^*G)(x))^{\frac{1}{2}}\right)\right)^{1+\epsilon}dA(x)\]

\[=C\int
\left(tr\left(\left\{\int(F^*F)(z)dA(z)\right\}^{\frac{1}{2}}(G^*G)(x)\left\{\int(F^*F)(z)dA(z)\right\}^{\frac{1}{2}}\right)\right)^{1+\epsilon}dA(x)\]

and so as $G^*G$ also has the $A_2$ condition, we can use our
reverse H\"{o}lder again to see that this last expression is less
than or equal to

\[C\left\{\int
\left(tr\left(\left\{\int(F^*F)(z)dA(z)\right\}^{\frac{1}{2}}(G^*G)(x)\left\{\int(F^*F)(z)dA(z)\right\}^{\frac{1}{2}}\right)\right)dA(x)\right\}^{1+\epsilon},\]

where as usual $C$ is a constant that possibly changes from line to
line.

By the M\"{o}bius invariance of the Berezin transform  (\cite{zhu}
page 143) we see that

\[\int\int
(tr(((G^*G)(x))^{\frac{1}{2}}(F^*F)(z)((G^*G)(x))^{\frac{1}{2}}))^{1+\epsilon}|k_w(x)|^2|k_w(z)|^2dA(z)dA(x)
 \]

\[\leq C
(tr((B(G^*G)(w))^{\frac{1}{2}}B(F^*F)(w)(B(G^*G)(w))^{\frac{1}{2}}))^{1+\epsilon}
< CM^{1+\epsilon}.\] Hence by Theorem \ref{eps}, we can see that the
Toeplitz product $T_FT_{G^*}$ is bounded. The invertibility of this
Toeplitz product follows from Lemma \ref{inverse}.
\\
\\
"$\Rightarrow$" If $T_FT_{G^*}$ is bounded and invertible, we know
from Theorem \ref{trace} that \\$tr(B(F^*F)(w)B(G^*G)(w))$ is
uniformly bounded and that $T_FT_{G^*}$ is bounded below. Thus in
particular \[\int\left\langle T_FT_{G^*}k_w
\textbf{e},T_FT_{G^*}k_w\textbf{e}\right\rangle dA(z) > \eta
\int\left\langle k_w \textbf{e}, k_w \textbf{e}\right\rangle dA(z) =
\eta\left\langle \textbf{e},\textbf{e}\right\rangle \] for all
vectors $\textbf{e} \in \mathbb{C}^n$. We know that $T_FT_{G^*}k_w =
F(z)G^*(w)k_w(z)$ and so we deduce that $G(w)B(F^*F)(w)G^*(w) > \eta
I$. From the fact that $||(T_FT_{G^*})^*||$ is also bounded below we
can see that $F(w)B(G^*G)(w)F^*(w) > \eta I$. From these we deduce
the following;

\[B(G^*G)(w) > \eta F^{-1}(w)F^{*-1}(w)\] and

\[B(F^*F)(w) > \eta G^{-1}G^{*-1}(w)\]

which lets us see that
\begin{multline*}
\left\{G^{-1}G^{*-1}(w)\right\}^{\frac{1}{2}}B(G^*G)(w)\left\{G^{-1}G^{*-1}(w)\right\}^{\frac{1}{2}}
\\>
\eta\left\{G^{-1}G^{*-1}(w)\right\}^{\frac{1}{2}}F^{-1}(w)F^{*-1}(w)\left\{G^{-1}G^{*-1}(w)\right\}^{\frac{1}{2}}
\end{multline*}
and also
\begin{multline*}
\left\{B(G^*G)(w)\right\}^{\frac{1}{2}}B(F^*F)(w)\left\{B(G^*G)(w)\right\}^{\frac{1}{2}}
\\>
\eta\left\{B(G^*G)(w)\right\}^{\frac{1}{2}}G^{-1}(w)G^{*-1}(w)\left\{B(G^*G)(w)\right\}^{\frac{1}{2}},
\end{multline*}
 thus

\[tr(B(G^*G)(w)B(F^*F)(w)) =
tr(\left\{B(G^*G)(w)\right\}^{\frac{1}{2}}B(F^*F)(w)\left\{B(G^*G)(w)\right\}^{\frac{1}{2}})
 \]
\begin{multline*}
>\eta
tr(\left\{B(G^*G)(w)\right\}^{\frac{1}{2}}G^{-1}G^{*-1}\left\{B(G^*G)(w)\right\}^{\frac{1}{2}}))\\
=\eta(tr(\left\{G^{-1}G^{*-1}(w)\right\}^{\frac{1}{2}}B(G^*G)(w)\left\{G^{-1}G^{*-1}(w)\right\}^{\frac{1}{2}}))
\\>\eta^2(tr(\left\{G^{-1}G^{*-1}(w)\right\}^{\frac{1}{2}}F^{-1}(w)F^{*-1}(w)\left\{G^{-1}G^{*-1}(w)\right\}^{\frac{1}{2}})).
\end{multline*}

Thus as $tr(B(G^*G)(w)B(F^*F)(w))$ is uniformly bounded,\\
$tr(G^{*-1}(w)F^{-1}(w)F^{*-1}(w)G^{-1}(w))$ is uniformly bounded,
by $\lambda$ ,say, and so\\ $G^{*-1}(w)F^{-1}(w)F^{*-1}(w)G^{-1}(w)
< \lambda^\prime I$, which gives us that $F(w)G^*(w)G(w)F^*(w) >
\frac{1}{\lambda^\prime}I.$

\end{proof}

\subsection*{Acknowledgment}
I wish to thank S. Pott for introducing me to this problem and
discussing various ideas involved.

\end{document}